\UseAllTwocells \xyoption{frame} \CompileMatrices
\newtheorem{theorem}{Theorem}[section]
\newtheorem{corollary}[theorem]{Corollary}
\newtheorem{lemma}[theorem]{Lemma}
\newtheorem{proposition}[theorem]{Proposition}
\newtheorem{conjecture}[theorem]{Conjecture}
\newtheorem*{claim*}{Claim}
\theoremstyle{remark}
\newtheorem{remark}[theorem]{Remark}
\newcommand{\bpsi}{\bar{\psi}}
\newcommand{\bbpsi}{\bar{\bar{\psi}}}
\newcommand{\Mbar}{\overline{\M}}
\newcommand{\proj}{\mathbb{P}}
\renewcommand\H{{\mathcal H}}  
\newcommand{\M}{\mathcal{M}}
\newcommand{\A}{\mathbb{A}}
\newcommand{\cE}{\mathcal{E}}
\newcommand{\cD}{\mathcal{D}}
\newcommand{\cF}{\mathcal{F}}
\newcommand{\bt}{\mathbf{t}}
\newcommand{\cH}{\mathcal{H}}
\newcommand{\eq}{\text{\textit{eq}}}
\newcommand{\tw}{\text{\textit{tw}}}
\newcommand{\block}{\text{\textit{block}}}
\def\<{\left\langle}
\def\>{\right\rangle}
\newcommand{\NN}{\mathbb{N}}
\newcommand{\QQ}{\mathbb{Q}}
\newcommand{\VV}{\mathbb{V}}
\newcommand{\RR}{\mathbb{R}}
\newcommand{\CC}{\mathbb{C}}
\newcommand{\Cstar}{\CC^\times}
\newcommand{\ZZ}{{\mathbb{Z}}} 
\newcommand{\bx}{\mathbf{x}}
\newcommand{\bepsilon}{\boldsymbol{\epsilon}}
\newcommand{\by}{\mathbf{y}}
\newcommand{\cA}{\mathcal{A}}
\newcommand{\cI}{\mathcal{I}}
\newcommand{\cL}{\mathcal{L}}
\DeclareMathOperator{\Id}{Id}
\DeclareMathOperator{\Eff}{Eff}
\DeclareMathOperator{\bigO}{O}
\DeclareMathOperator{\Euler}{Euler}
\DeclareMathOperator{\Nov}{\Xi}
\def\Res{\operatorname{Res}}
\DeclareMathOperator{\Aut}{Aut}
\DeclareMathOperator{\End}{End}
\DeclareMathOperator{\ev}{ev}
\newcommand{\vir}{\text{\rm vir}}
\newcommand{\correlator}[1]{\left \langle #1 \right \rangle}
\def\L{{\mathcal L}}
\def\h{{\hbar}}
\def\str{\operatorname{str}}
\renewcommand\hat{\widehat}
\def\rk{\operatorname{rk}}
\def\ct{\operatorname{ct}}
\def\A{{\mathcal A}}
\begin{document}

\title{Virasoro Constraints for Toric Bundles}

\author{Tom Coates}
\address{Department of Mathematics\\ Imperial College London\\ 180 Queen's Gate\\ London SW7 2AZ\\ UK}
\email{t.coates@imperial.ac.uk}

\author{Alexander Givental}
\address{Department of Mathematics\\ University of California at Berkeley\\ Berkeley, California, 94720\\ USA} 
\email{givental@math.berkeley.edu}

\author{Hsian-Hua Tseng}
\address{Department of Mathematics\\ Ohio State University\\ 100 Math Tower\\ 231 West 18th Avenue\\ Columbus, OH 43210-1174\\ USA}
\email{hhtseng@math.ohio-state.edu}

\date{\today}

\begin{abstract} 
  We show that the Virasoro conjecture in Gromov--Witten theory holds for the
  the total space of a toric bundle $E \to B$ if and only if it holds for
  the base $B$.  The main steps are: (i) we establish a localization formula that expresses
  Gromov--Witten invariants of $E$, equivariant with respect to the fiberwise torus
  action, in terms of genus-zero invariants of the toric fiber and
  all-genus invariants of $B$; and (ii) we pass to the non-equivariant limit in this formula, using Brown's mirror theorem for toric bundles.
 \end{abstract}

\maketitle

{\centering \em To the memory of Bumsig Kim\par}

\section{Introduction} 

Virasoro constraints are differential relations between generating functions for Gromov--Witten invariants of a compact K\"{a}hler manifold $X$. In order to formulate these relations, we begin with a refresher on the structure of Gromov--Witten invariants in genus zero. We assume that the reader is familiar with the basic assumptions and results in Gromov--Witten theory; introductions to the subject, from a compatible point of view, can be found in \cite{Brown_toric, Coates-Givental, Giv_quantization, Giv_Frob}.  

\subsection{Genus-Zero Gromov--Witten Theory} 

Let $H$ denote the classical cohomology algebra of $X$, which we equip with coefficients in the {\em Novikov ring} $\Nov$  of $X$. Following \cite{Giv_quantization, Giv_Frob}, we encode genus-zero Gromov--Witten invariants of $X$ by an {\em overruled Lagrangian cone} $\L_X$ in the {\em symplectic loop space} $(\H, \Omega)$.  Here $\H := H(\!(z^{-1})\!)$ is the $\ZZ_2$-graded module over the Novikov ring consisting of Laurent series in $z^{-1}$ with vector coefficients. The symplectic form on $\H$ is
$$\Omega (f,g) := \Res_{z=0} \big(f(-z), g(z)\big)\,dz$$ where $(\cdot, \cdot)$ is the Poincar\'e pairing on $H$ with values in $\Nov$. The subspaces $\H_{+}:=H[z]$ and $\H_{-}:=z^{-1}H[\![z^{-1}]\!]$ form a Lagrangian polarization of $(\H,\Omega)$, thus identifying it with $T^*\H_{+}$. The Lagrangian cone $\L_X$ is a germ of a Lagrangian section over the point $-1 z \in \H_{+}$, where $1$ is the unit vector in $H$. This section is therefore the graph of differential of a formal function on $\H_{+}$, the genus-zero descendant potential of $X$, although with the domain translated by the \emph{dilaton shift} $\bt \mapsto \bt - 1z$.  The statement that $\L_X$ is {\em overruled} means that each tangent space $T$ to $\L_X$ is a $\Nov [z]$-module, and is in fact tangent to $\L_X$ exactly along $zT$.

\subsection{Grading}
\label{sec:grading}
The fact that $\L_X$ is an overruled cone with the vertex at the origin of $\H$ puts constraints on the genus-zero descendant potential of $X$ which are exactly equivalent to the dilaton equation, string equation, and topological recursion relations \cite{Giv_Frob}. In particular, the string equation can be formulated as invariance of $\L_X$ under the flow of the linear vector field on $\H$ defined by the operator, denoted $l_{-1}$, of multiplication by $z^{-1}$.

To introduce the Virasoro constraints, one needs to invoke one more structure in Gromov--Witten theory: {\em grading}.  Consider the {\em twisted loop group}, that is, the group of operators on $\H$ commuting with $z$ and preserving the symplectic form.  An element $a$ of the Lie algebra of the twisted loop group is an $\End(H)$-valued function of $z$ satisfying $a(-z)^*=-a(z)$, where the asterisk $*$ denotes adjoint with respect to the Poincar\'e pairing on $H$.  The grading condition in Gromov--Witten theory can be formulated as invariance of $\L_X$ with respect to the flow of the linear vector field defined by an operator, denoted $l_0$, of the form: $$l_0 = zd/dz+1/2 + a$$ where $a$ is a suitable element of the Lie algebra of the twisted loop group.  Since $zd/dz+1/2 = \sqrt{z} (d/dz) \sqrt{z}$ is antisymmetric with respect to $\Omega$, the operator $l_0$ is also antisymmetric with respect to $\Omega$. In the case of Gromov--Witten theory of the K\"ahler manifold $X$, $a = \mu+\rho/z$, where $\mu \colon H\to H$ is the {\em Hodge grading operator} (i.e.~the operator of grading in cohomology measured from the middle degree and taking half-integer values), and $\rho$ is the operator of multiplication by $c_1(T_X)$ using the classical cup-product on $H$.

The grading property of $\L_X$ is the consequence of dimensional constraints: Gromov--Witten invariants, being integrals of cohomology classes against the virtual fundamental cycle of a moduli space of stable maps, vanish unless the degree of the class matches the dimension of the cycle. For such constraints to translate into the grading property on $\H$ it is necessary that constants, i.e.~elements of the ground ring, have degree zero. For example, there is no grading property in equivariant Gromov--Witten theory, because generators of the coefficient ring of equivariant cohomology theory have non-zero degrees.  In non-equivariant Gromov--Witten theory, Novikov variables also have non-trivial degrees, $\deg Q^d = \int_d c_1(T_X)$, but nevertheless the grading property holds due to the {\em divisor equation}, which allows one to recast the non-trivial grading of constants into the correction $\rho/z$ to the grading operator $l_0$.  The following description of Virasoro operators, though in a Fourier-dual form,
goes back to E. Getzler's paper~\cite{Getzler:Virasoro}.

\subsection{Virasoro Constraints in Genus Zero} 

We have $[l_0,l_{-1}]=-l_{-1}$.  Consequently the operators:
\begin{align*}
  l_{-1}=z^{-1},&& 
  l_0,&& 
  l_1:=l_0zl_0&&
  l_2:=l_0zl_0zl_0, &&
  \dots,&&
  l_k:=l_0(zl_0)^k,&&
  \dots
\end{align*}
commute as vector fields $x^{k+1}\frac{d}{dx}$ on the line: $[l_m, l_n] = (n-m) l_{m+n}$. The genus-zero Virasoro constraints, which were first proved by X.~Liu--G.~Tian \cite{Liu-Tian}, can be stated and proved as follows.

\begin{proposition}[{see \cite[Theorem~6]{Giv_Frob}}] 
  If the linear vector field on $\H$ defined by $l_0$ is tangent to the overruled Lagrangian cone $\L_X \subset \H$, then the linear vector fields defined by the operators $l_m$,~$m\geq -1$, are all tangent to $\L_X$.  
\end{proposition}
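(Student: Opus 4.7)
The plan is to run an induction on $m \geq -1$, using the overruled structure of $\L_X$ as the engine that moves back and forth between points of $\L_X$ and their tangent vectors. The feature to extract from ``overruled cone'' is: for each $f \in \L_X$, the tangent space $T := T_f \L_X$ is a $\Nov[z]$-module, the linear subspace $zT$ lies entirely in $\L_X$ (it is the ruling through $f$), and the tangent space of $\L_X$ at every point of $zT$ is again the same $T$. The case $m = -1$ is free: since $f \in \L_X$ forces $f = z g$ for some $g \in T$, one has $l_{-1} f = z^{-1} f = g \in T_f \L_X$, which is just the string equation. The case $m = 0$ is the hypothesis of the proposition.

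For the inductive step I would fix $f \in \L_X$, write $T = T_f \L_X$, and assume $l_{m-1} f \in T$. The argument is then a four-step chain: $z\,l_{m-1} f \in T$ because $T$ is $z$-invariant; $z\, l_{m-1} f \in \L_X$ because $z T \subset \L_X$; the tangent space to $\L_X$ at $z\, l_{m-1} f$ is again $T$ because the tangent space is constant along the ruling $z T$; and therefore $l_0\bigl(z\, l_{m-1} f\bigr) \in T$ by the standing hypothesis that $l_0$ is tangent to $\L_X$. Since by definition $l_m = l_0 (z l_0)^m = l_0\, z\, l_{m-1}$, this reads $l_m f \in T_f \L_X$, closing the induction. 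Running the same argument at every $f \in \L_X$ yields tangency of $l_m$ to $\L_X$ as a vector field.

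The only place one could stumble is the third step of the chain above: that the tangent space really does stay constant as one slides along the ruling $zT$. This is precisely what makes ``overruled'' strictly stronger than a bare cone structure, and it is what licenses re-using the hypothesis on $l_0$ at the new point $z\, l_{m-1} f$. Once this has been registered, the whole argument is formal and does not use the explicit shape $l_0 = z\, d/dz + 1/2 + \mu + \rho/z$ of the grading operator; only its abstract tangency to $\L_X$ enters.
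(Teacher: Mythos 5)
Your proof is correct and takes essentially the same route as the paper's: the paper writes $l_0 f \in T$, hence $zl_0 f \in zT$, hence $l_0 z l_0 f \in T$, ``etc.'', which is exactly your induction chain $l_m = l_0\, z\, l_{m-1}$ with the ruling-constancy of the tangent space doing the work at each step. You simply make explicit two things the paper leaves implicit --- that the argument is an induction, and that the $m=-1$ case is the string equation --- and correctly identify ruling-constancy as the load-bearing property.
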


\begin{proof} Let $T$ be the tangent space to $\L_X$ at a point $f$. Then $l_0f \in T$ (by hypothesis), and so $zl_0f \in zT$.  Thus $l_0zl_0f\in T$, which implies that $zl_0zl_0f\in zT$ and hence $l_0zl_0zl_0f\in T$, etc.  \end{proof}

\subsection{Virasoro Constraints in Higher Genus}
\label{sec:Virasoro_higher_genus}

Genus-$g$ Gromov--Witten invariants of $X$ are encoded by the {\em genus-$g$ descendant potential} $\cF^g_X$, which is a formal function on $\H_{+}$ (with coefficients in the Novikov ring) defined near the origin. The totality of Gromov--Witten invariants of $X$ is  encoded by the expression:
\begin{equation}\label{defn:desc_potential_intro}
\cD_X := \exp \left( \sum_{g=0}^{\infty} \h^{g-1} \cF^g_X \right)
\end{equation}
called the {\em total descendant potential} which, after the dilaton shift by  $-1z$, is interpreted as an ``asymptotic element'' of the Fock space associated through quantization with the symplectic loop space $(\H, \Omega)$: see~\cite{Giv_quantization}. The quantization rules by which quadratic Hamiltonians on $\H$ act on  elements of the Fock space are as follows. In a Darboux coordinate system $\{ q^a, p_b\}$ compatible with the polarization $\H=\H_{+}\oplus \H_{-}$, we have: 
\begin{align*}
  \widehat{q^a q^b}:=\textstyle \frac{q^a q^b}{\h}, &&
  \widehat{q^a p_b}:=\textstyle q^a \frac{\partial}{\partial q^b}, &&
  \widehat{p_a p_b}:=\textstyle \h \frac{\partial}{\partial q^a}\frac{\partial}{\partial q^b}
\end{align*}
The linear operators $l_k$ are infinitesimal symplectic transformations and thus correspond to quadratic Hamiltonians. Their quantizations, 
$\hat{l}_m$, satisfy:
\[ [\hat{l}_m, \hat{l_n}] = (n-m)\hat{l}_{m+n}+c_{m,n}\]
where $c_{m,n}=-c_{n,m}$ forms a $2$-cocycle due to the Jacobi identity.  On the Lie algebra of vector fields, any such cocycle is a coboundary, that is, the commutation relations can be restored by adding to the generators $\hat{l}_k$ suitable constants. Namely,  $c_{m,n}=(m-n)c_{m+n,0}/(m+n)$ when $m+n\neq 0$, and $m+n=0$ only when $(m,n)=\pm (-1,1)$. So, the following corrected quantized operators commute  as the classical ones:
\begin{align*}
  L_k:=\hat{l_k}+c_k, &&
  \text{where }
  c_k=
  \begin{cases}
    \frac{c_{k,0}}{k} & \text{if $k \ne 0$} \\
    \frac{c_{1,-1}}{2} & \text{if $k=0$.}
  \end{cases}
\end{align*}
Virasoro constraints, first conjectured by T. Eguchi, K. Hori, M. Jinzenji, C.-S. Xiong, and S. Katz in \cite{EHX, EJX}, can be stated as follows.

\begin{conjecture}[Virasoro Conjecture] 
  If a total descendant potential $\cD$ satisfies the string and grading constraints, i.e.~if $L_{-1}\cD=0$ and $L_0\cD=0$, then it satisfies all higher Virasoro constraints:  $L_k\cD=0$ for $k=1,2,\dots$.
\end{conjecture}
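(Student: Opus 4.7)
The plan is to mirror the genus-zero argument at the quantized level, acknowledging that the central extension $c_{m,n}$ obstructs a purely formal transcription. In genus zero, overruledness of $\L_X$ bootstrapped the tangency of $l_0$ to tangency of every $l_k$ via the identity $l_k = l_0 (z l_0)^k$; the analogous hope is that $L_{-1}\cD = 0$ and $L_0\cD = 0$ propagate to $L_k\cD = 0$ for all $k\ge 1$.

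First I would isolate the role of the central correction: the discrepancy between $L_k$ and a naive ordering of $L_0$ and multiplication by $z$ is absorbed into the explicit scalars $c_k$, so the quantized commutator hierarchy $[L_m,L_n] = (m-n)L_{m+n}$ should suffice to reduce the higher constraints to combinations of the string and grading constraints, provided $\cD$ has enough structural properties (dilaton equation, topological recursion relations) to make the formal bootstrap well-defined.

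Second, in the semisimple case I would invoke Givental--Teleman's reconstruction theorem, which expresses $\cD$ as a quantized twisted-loop-group element applied to a tensor product of Witten--Kontsevich tau functions, for which Virasoro constraints are classical. Since the $L_k$ are built from $zd/dz + 1/2$ and from multiplication operators, and since these behave covariantly under the loop group action up to computable central terms, the semisimple case should follow by tracking the central contributions and matching them with the $c_k$.

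Third and hardest, the non-semisimple case, where $\cD$ is no longer reconstructible from genus-zero data. This is the setting of the present paper: when the target fibers as $E\to B$ with toric fiber, the fiberwise torus action enables equivariant localization expressing Gromov--Witten invariants of $E$ in terms of genus-zero invariants of the toric fiber (where Virasoro is formal by the proposition above) and all-genus invariants of $B$. Assuming the conjecture for $B$, one can then transport the constraints along the localization formula to obtain them for $E$. I expect the main obstacle to lie in the passage to the non-equivariant limit: equivariantly there is no grading property at all, since the generators of the equivariant coefficient ring carry nontrivial degrees, so one must delicately reconcile the equivariantly-localized constraints with the correct non-equivariant $L_k$, using Brown's mirror theorem for toric bundles to control the limit and the divisor equation to reabsorb the Novikov degrees into a shift of $l_0$ by $\rho/z$.
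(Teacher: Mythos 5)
The statement you were handed is a \emph{conjecture}, not a theorem, and the paper offers no proof of it; what the paper proves (Theorem~1.7) is the conditional statement that the conjecture holds for a toric bundle $E\to B$ if and only if it holds for the base $B$. So there is no ``paper's proof'' to match; any proposal that claims to prove the Virasoro Conjecture outright is necessarily wrong. The specific false step is in your second paragraph. The commutation relations $[L_m,L_n]=(m-n)L_{m+n}$ do \emph{not} allow you to derive $L_k\cD=0$ for $k\ge 1$ from $L_{-1}\cD=0$ and $L_0\cD=0$: the Lie subalgebra generated by $L_{-1}$ and $L_0$ is just their span, since $[L_0,L_{-1}]=-L_{-1}$ produces nothing new. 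If the higher constraints were ``combinations of the string and grading constraints'' in the way you suggest, the Virasoro Conjecture would be a triviality rather than an open problem.

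The deeper point you miss is \emph{why} the genus-zero bootstrap does not quantize. In genus zero one argues: $l_0 f\in T$ implies $z l_0 f\in zT\subset\L$, and by \emph{overruledness} the tangent space to $\L$ at $z l_0 f$ is still $T$, so $l_0 z l_0 f\in T$, and so on. This is a geometric statement about the cone $\L_X$ and its ruling. After quantization one is working with a single function $\cD$ in a Fock space, and there is no analogue of ``tangent space along $zT$'' -- the overruled structure does not lift, and neither does the bootstrap. The dilaton equation and topological recursion relations are already encoded in overruledness and do not supply the missing ingredient. (Your third paragraph, invoking Givental--Teleman in the semisimple case, and your fourth paragraph, describing the localization/non-equivariant-limit strategy of this paper, correctly describe the two available escape routes; but the second assumes the conjecture for $B$, so neither proves the general conjecture, and you should present them as such rather than as the culmination of a formal-bootstrap proof.)

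One smaller correction to the fourth paragraph: the genus-zero Virasoro proposition for the toric fiber is not what does the work. Localization expresses the ancestor potential of $E$ in terms of genus-zero data for the fiber and \emph{all-genus} data for $B$, packaged as a quantized loop group element $\hat{M}$ applied to $\cD_B^{\otimes n}$. It is then Proposition~\ref{pro:covariance} (loop-group covariance of Virasoro constraints, applied once the non-equivariant limit and grading-preservation of $M$ are established) that transports Virasoro from $B$ to $E$, not the genus-zero proposition directly.
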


This formulation can be understood as a statement about total descendant potentials $\cD$ of abstract, axiomatically described Gromov--Witten-like theories as introduced by Kontsevich--Manin \cite{Kont-Manin} (see also \cite{Teleman}). When $\cD$ is the total descendant potential $\cD_X$ of a target space $X$, the string equation always holds and the grading constraint $L_0\cD_X=0$ holds in non-equivariant Gromov--Witten theory for dimensional reasons: see e.g.~\cite[Theorem 2.1]{Getzler:Virasoro}, where it is referred to as Hori's equation. In this case, the central constants are:
\[
c_0 = \frac{\chi (X)}{16} + \frac{\str (\mu \mu^*)}{4} 
\]
and $c_k=0$ for $k\neq 0$.
The fact that $\cD_X$ is an eigenfunction of $\hat{l}_0$, the grading operator {\em per se}, with eigenvalue $-c_0$ comes from the anomalous term in the dilaton equation arising from the ``missing'' genus-one degree-zero moduli space of stable maps to $X$. The fact that the eigenvalue, which comes from some Hodge integral over $\overline{\M}_{1,1}\times X$, coincides with the constant $c_0$ dictated by the commutation relations, can be considered non-trivial evidence in favor of the Virasoro Conjecture.

In \cite{Giv_quantization}, Givental described an approach to the Virasoro Conjecture for target spaces $X$ with semisimple quantum cohomology, and proved the Conjecture for toric Fano manifolds.  Subsequently this approach was used to prove the  Conjecture for general toric manifolds~\cite{Iritani}, complete flag manifolds~\cite{Joe-Kim}, Grassmanians~\cite{BCFK}, and all compact K\"ahler manifolds with semisimple quantum cohomology algebras~\cite{Teleman}. The Virasoro Conjecture holds for Calabi--Yau manifolds for dimensional reasons \cite{Getzler:Virasoro}.  It has also been proved for nonsingular curves~\cite{OP}, using an entirely different set of techniques. 

\subsection{Loop Group Covariance} 
\label{sec:covariance}

The Lie algebra of the twisted loop group acts by infinitesimal symplectic transformations on $(\H,\Omega)$, and the central extension of this Lie algebra acts via quantization on elements of the Fock space. Exponentiating, one defines the action of the twisted loop group elements: $\hat{M}=\exp (\widehat{\ln M})$. As a word of warning, we should add that in practice we will need the action of certain elements of completions of the loop group (completions into infinite $z$- or $z^{-1}$-series). Not all such operators can be applied to all elements of the Fock space, nor can such operators be composed arbitrarily. In practice we will use only particular types of quantized loop group elements applied to specific asymptotic elements of the Fock space in such an order that, due to certain nice analytic properties of the functions involved, the application makes sense (sometimes even when the product of infinite matrix series is ill-defined in the loop group itself).  With these warnings out of the way, let us assume that two asymptotical elements of the Fock space, $\cD'$ and $\cD''$, are related by such a loop group transformation: $\cD''=\hat{M}\cD'$. We claim that Virasoro constraints behave covariantly with respect to loop group transformations.

\begin{proposition}[Loop Group Covariance]
  \label{pro:covariance}
  Suppose that $\cD'$ and $\cD''=\hat{M}\cD'$ both satisfy
  the grading constraints $L'_0 \cD' = 0$, $L''_0 \cD'' = 0$ 
  for suitable grading operators $l_0'$ and $l_0''$ on $\H$. Suppose that $M$ respects the grading in the sense that $l_0''=M l_0'M^{-1}$. Then $\cD'$ satisfies Virasoro constraints if and only if $\cD''$ satisfies Virasoro constraints.  
\end{proposition}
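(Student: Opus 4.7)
The plan is to reduce the claim to a short manipulation of quantized operators, using (i) that $l_k = l_0 (z l_0)^k$ is built from $l_0$ and multiplication by $z$, (ii) that elements of the twisted loop group commute with multiplication by $z$, and (iii) that quantization of quadratic Hamiltonians is a projective representation of the Lie algebra of infinitesimal symplectic transformations on $(\H, \Omega)$.

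First I would propagate the hypothesis $l''_0 = M l'_0 M^{-1}$ to all of the Virasoro generators. Since $M$ commutes with multiplication by $z$, a direct computation gives $l''_k = M l'_k M^{-1}$ for every $k \geq -1$ (the case $k = -1$ being trivial, as $l_{-1} = z^{-1}$ commutes with $M$). Passing to quantizations, both $\hat{l}''_k$ and $\hat{M}\, \hat{l}'_k\, \hat{M}^{-1}$ quantize the same classical operator $l''_k$, so by projectivity they differ by a central constant. Absorbing these constants together with the classical central corrections $c'_k$ and $c''_k$ into new scalars $\delta_k$, one obtains
\[
  L''_k \;=\; \hat{M}\, L'_k\, \hat{M}^{-1} + \delta_k.
\]

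Next I would show $\delta_k = 0$ for all $k \geq -1$. The hypotheses $L'_0 \cD' = 0$ and $L''_0 \cD'' = 0$, combined with $\cD'' = \hat{M} \cD'$, immediately yield $\delta_0 \cD'' = 0$, hence $\delta_0 = 0$. For $k \neq 0$ the constants are pinned down by the commutation relations $[L''_0, L''_k] = -k L''_k$ and $[L'_0, L'_k] = -k L'_k$ that the corrected operators satisfy by construction. Since conjugation by $\hat{M}$ is a Lie algebra automorphism, $[\hat{M} L'_0 \hat{M}^{-1},\, \hat{M} L'_k \hat{M}^{-1}] = -k\, \hat{M} L'_k \hat{M}^{-1}$; comparing this (noting that commutators involving the central scalars $\delta_0$ and $\delta_k$ vanish) with $-k L''_k = -k\, \hat{M} L'_k \hat{M}^{-1} - k \delta_k$ forces $\delta_k = 0$.

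Once the identity $L''_k = \hat{M}\, L'_k\, \hat{M}^{-1}$ is established, the equivalence is immediate: $L''_k \cD'' = \hat{M}\, L'_k \cD'$, so invertibility of $\hat{M}$ on the relevant class of asymptotic elements gives $L'_k \cD' = 0 \iff L''_k \cD'' = 0$ for all $k \geq -1$. The main obstacle I anticipate is not conceptual but analytic: as warned in Section~\ref{sec:covariance}, the quantized loop group acts only on a restricted class of asymptotic elements and compositions of infinite series of quantized operators need not even be associative. Making $\hat{M} L'_k \hat{M}^{-1}$ rigorously well-defined on $\cD'' = \hat{M} \cD'$, and verifying that the projective quantization identity applies to the specific operators $l_k$ within the formal compositions permitted by \cite{Giv_quantization}, is where care must be taken.
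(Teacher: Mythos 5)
Your proposal is correct and follows essentially the same route as the paper: establish $l''_k = M\, l'_k\, M^{-1}$ from $l_k = l_0(zl_0)^k$ and $[M,z]=0$, observe that quantization can only introduce central constants, kill $\delta_0$ using the hypothesis that both $L'_0$ and $L''_0$ annihilate the respective potentials, and then kill $\delta_k$ for $k\neq 0$ via the commutation relation $[L_0,L_k]=-kL_k$, which fixes the additive constant in $L_k$ once $L_0$ is fixed. The only difference is that you spell out the $\delta_k=0$ step in more detail and flag the analytic caveats about composing quantized loop-group elements, which the paper handles by the blanket warning in \S\ref{sec:covariance} rather than inside the proof itself.
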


\begin{proof} 
  It suffices to show that $L_k''=\hat{M}L_k'\hat{M}^{-1}$ for all $k$. We have that 
  \begin{equation}
    \label{eq:quantize_me}
    l''_k=M l'_k M^{-1}
  \end{equation}
  since $l_k' = l_0' (zl_0')^k$, $l_k'' = l_0'' (zl_0'')^k$, and $M$ commutes with $z$. After quantization the left- and right-hand sides of \eqref{eq:quantize_me} can differ only by central constants $c_k$. Yet $c_0=0$, for otherwise $\cD''$, which is annihilated by both $L_0''$ and $\hat{M}L_0'\hat{M}^{-1}$, would be annihilated by $c_0\neq 0$, implying that $\cD''=\cD'=0$. Notice that for $k\neq 0$, the commutation relation in the Virasoro Lie algebra $[L_0,L_k]=kL_k$ is restored by adding a constant to $L_k$. Since $L_k''$ satisfies the same commutation relations as $\hat{M}L_k'\hat{M}^{-1}$, it follows that $c_k=0$ for all $k$.  \end{proof}

This covariant behavior of Virasoro constraints was the basis for the proof of the Virasoro Conjecture for target spaces with generically semisimple quantum cohomology algebras. Namely, Teleman has proven that the twisted loop group acts transitively on abstract semisimple theories obeying the string equation~\cite{Teleman}. More precisely, the ``upper-triangular'' part of the group, consisting of power series in $z$, acts on all Gromov--Witten-like theories in the Kontsevich--Manin sense, and in particular acts on the corresponding ancestor potentials, whilst the ``lower-triangular'' part, consisting of power series in $z^{-1}$, transforms ancestor into descendant potentials. The combined action transforms (as conjectured in \cite{Giv_quantization}) the descendant potential $\cD_X$ of a semisimple target $X$ into the descendant potential $\cD_{\text{\rm point}}^{\otimes \dim H}$ of a zero-dimensional target. The fact that the descendant potential of the point target satisfies Virasoro constraints is equivalent to the celebrated Witten--Kontsevich theorem \cite{Witten,Kont} relating intersection theory on Deligne--Mumford spaces to the KdV~hierarchy.

\subsection{Toric Bundles} 
\label{sec:toric_bundles}

It is well-known that a compact projective toric manifold $X$ can be obtained by symplectic reduction from a linear space, $X=\CC^N/\!\!/K$, by a subtorus $K := (S^1)^k$ of the maximal torus $T := (S^1)^N$ of diagonal unitary matrices: see e.g.~\cite{Audin,Giv_toric}. We assume without loss of generality that $k=\rk H^2(X)$. Let $B$ be a compact K\"ahler manifold, and let $L_1\oplus \cdots \oplus L_N\to B$ be a rank $N$ complex vector bundle decomposed as a direct sum of line bundles.  The maximal torus $T$ acts fiberwise on this bundle, and one can perform symplectic reduction by the subtorus $K$ fiberwise, thus obtaining a {\em toric bundle} $E\to B$ with fiber~$X$. The group $T$ is abelian, so $E$ still carries a canonical (fiberwise) left action of $T:=(\Cstar)^N$. Let us denote by $E^T$ the fixed point locus of this action. It consists of $n:=\rk H^*(X)$ copies of $B$, which are sections of the bundle $E\to B$. The main result of this paper is:

\begin{theorem} 
  \label{thm:main}
There exists a grading-respecting loop group operator which relates the total descendant potential of a toric bundle space $E$ to that of the fixed point manifold $E^T$: $\cD_E = \hat{M}\cD_{B}^{\otimes n}$. 
\end{theorem}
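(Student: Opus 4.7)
The plan is to work equivariantly with respect to the fiberwise torus action of $T = (\Cstar)^N$ on $E$, apply virtual localization to the moduli spaces of stable maps to $E$, and then take the non-equivariant limit. The $T$-fixed locus $E^T$ is the disjoint union of $n$ copies of $B$, one for each fixed point of the toric fiber $X$, so the $T$-equivariant total descendant potential of $E$ admits a natural candidate comparison with $\cD_B^{\otimes n}$.

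First, I would apply virtual localization (Graber--Pandharipande) to $\overline{\M}_{g,k}(E,d)$. A $T$-fixed stable map to $E$ decomposes into \emph{vertex} components, which are contracted to one of the $n$ sections $B_i \subset E^T$, and \emph{edge} components, which are multiple covers of $T$-invariant rational curves in the fibers of $E \to B$. The vertex contributions assemble into ancestor-type integrals over $\overline{\M}_{g,k+\ell}(B,d_B)$ twisted by equivariant Euler classes of certain fiberwise bundles, while the edge and disk contributions, indexed by the combinatorics of the fan of $X$, are governed by genus-zero equivariant Gromov--Witten invariants of $X$. Repackaging these in the language of the symplectic loop space, I expect to produce an equivariant identity of the form $\cD_E = \hat{M}_T\,\cD_B^{\otimes n}$, where $\hat{M}_T$ is built from genus-zero fiber data: the $S$-matrices of $X$ act as propagators along edges, and quantized ``vertex'' operators encode disk contributions at each of the $n$ toric fixed points. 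This realizes step~(i) of the abstract and is a direct, if combinatorially intricate, generalization of the localization technique used for toric targets in \cite{Giv_quantization}.

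The main obstacle is step~(ii): taking the non-equivariant limit. The matrix entries of $\hat{M}_T$ are rational functions of the equivariant parameters $\lambda_1,\dots,\lambda_N$ with poles on the discriminant where various $\lambda_i$ degenerate, so the naive specialization $\lambda \to 0$ is illegal. To bypass this, I would invoke Brown's mirror theorem for toric bundles, which exhibits an explicit hypergeometric modification of Givental's $J$-function of $B$ that lies on $\L_E$ and, crucially, is regular at $\lambda = 0$. Combined with the genus-zero uniqueness principle that an upper-triangular loop group element is determined by how it transports one $J$-function to another, this should let me factor $\hat{M}_T$ as a product of a genuinely non-equivariant loop group operator $\hat{M}$ and an equivariant hypergeometric factor whose limit is controlled; the key technical point is that the pole/zero structure of the hypergeometric factor precisely cancels the singularities introduced by localization.

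Finally, I would verify grading compatibility. On each fixed section $B_i$, the natural grading operator is the one for $B$ shifted by equivariant Chern roots of the normal bundle of $B_i$ in $E$, and by construction the vertex and edge operators building $\hat{M}_T$ are homogeneous with respect to these shifted gradings. Passing to the non-equivariant limit collapses the shifts and yields the required relation $l_0^E = M\, l_0^{B^{\otimes n}}\, M^{-1}$. Combined with Proposition~\ref{pro:covariance}, the resulting Theorem~\ref{thm:main} then reduces the Virasoro conjecture for $E$ to the Virasoro conjecture for $B$.
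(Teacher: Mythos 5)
Your high-level plan---localize the $T$-equivariant theory of $E$ to the sections $E^T \cong B^{\sqcup n}$, invoke Brown's mirror theorem to control the non-equivariant limit, then check gradings---is exactly the structure of the paper's proof, so you have the right skeleton. But two genuine gaps remain. First, after localization the vertex contributions land in the \emph{twisted} Gromov--Witten theory of $B$ (twisted by the normal bundles $N^\alpha$ of the fixed sections and the inverse equivariant Euler class), so what localization directly produces is a relation between $\cD_E^{\eq}$ and the twisted descendant potentials $\cD_B^{\alpha,\tw}$, not $\cD_B^{\otimes n}$. Untwisting requires the Quantum Riemann--Roch theorem of Coates--Givental, which supplies the quantized operators $\widehat{\Gamma_\alpha^{-1}}$ satisfying $\cD_B^{\alpha,\tw} = \widehat{\Gamma_\alpha^{-1}}\,\cD_B^{\eq}$; this ingredient is absent from your proposal and without it the operator you call $\hat{M}_T$ does not act on $\cD_B^{\otimes n}$.

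Second, the non-equivariant limit argument as you state it is not yet a proof. The assertion that the hypergeometric factor's pole/zero structure ``precisely cancels'' the localization singularities is exactly what has to be shown, and the uniqueness of the upper-triangular loop group element does not help, since uniqueness only determines the operator \emph{after} one knows the limit exists. The actual mechanism is: (a) Birkhoff-factorize $S^{\alpha,\tw}(u_\alpha)\,\Gamma_\alpha^{-1} = R_\alpha(u_\alpha)\,S_B(\tau^*_\alpha)$ and absorb the power-series parts into a single operator $R'$; (b) identify $\Gamma_\block(-z)T(z)$, where $T$ is built by differentiating Brown's $I$-function, with the stationary phase asymptotics of the oscillating-integral mirror to the toric fiber $X$; and (c) observe that these asymptotics extend to $\lambda = 0$ because the critical points of the mirror superpotential remain non-degenerate in the limit, which is equivalent to semisimplicity of the non-equivariant quantum cohomology of $X$. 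That stationary-phase step is where the real work lies, and your sketch does not engage with it.
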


\noindent The discussion in the preceding two sections then yields:

\begin{corollary}
  The Virasoro Conjecture holds for the total space $E$ if and only if it holds for the base $B$.
\end{corollary}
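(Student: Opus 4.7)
The plan is to chain Theorem~\ref{thm:main} with Proposition~\ref{pro:covariance} (Loop Group Covariance) and then peel off the tensor power by a separation-of-variables argument. The real content of the corollary has already been packaged into Theorem~\ref{thm:main}; the corollary itself is essentially bookkeeping.

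First I would verify the hypotheses of Proposition~\ref{pro:covariance}. Since $E$ and $B$ are compact K\"ahler manifolds and we are working non-equivariantly, both $\cD_E$ and $\cD_B$---hence also $\cD_B^{\otimes n}$, by the block-diagonal structure discussed next---satisfy the string equation $L_{-1}\cD=0$ and the grading equation $L_0\cD=0$ for the dimensional reasons recalled in Section~\ref{sec:Virasoro_higher_genus}. Combined with the grading-respecting property of $M$ guaranteed by Theorem~\ref{thm:main}, Proposition~\ref{pro:covariance} then immediately delivers the equivalence: $\cD_E$ satisfies all Virasoro constraints if and only if $\cD_B^{\otimes n}$ does.

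Next I would reduce Virasoro for $\cD_B^{\otimes n}$ to Virasoro for $\cD_B$. The orthogonal splitting $H^*(B^{\sqcup n})=H^*(B)^{\oplus n}$ under the Poincar\'e pairing descends to the symplectic loop space, its polarization, the Darboux coordinates, and the grading operator $l_0$ (since $c_1(T_{B^{\sqcup n}})$ and the Hodge grading $\mu$ are block-diagonal), hence to every $l_k=l_0(zl_0)^k$, which becomes $\bigoplus_{i=1}^n l_k^{(i)}$ for $n$ identical copies. After quantization the Virasoro operators for the disjoint union take the form $L_k^{\mathrm{tot}}=\sum_i L_k^{(i)}$, with each $L_k^{(i)}$ touching only the $i$-th block of variables; the central constants add correctly because $\chi$ and $\str(\mu\mu^*)$ are additive in the closed-form expression for $c_0$, and $c_k=0$ for $k\neq 0$. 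Writing $\cD_B^{\otimes n}(q^{(1)},\dots,q^{(n)})=\prod_i\cD_B(q^{(i)})$, the implication $L_k\cD_B=0 \Rightarrow L_k^{\mathrm{tot}}\cD_B^{\otimes n}=0$ is immediate. For the converse, divide through by $\cD_B^{\otimes n}$ to obtain $\sum_i g(q^{(i)})=0$ with $g(q):=(L_k\cD_B)(q)/\cD_B(q)$; partial differentiation in any single block shows $g$ is constant, and the $S_n$-symmetry together with the sum being zero forces that constant to vanish, yielding $L_k\cD_B=0$.

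The hard step is Theorem~\ref{thm:main} itself, which I am taking as given. Within the present argument the only items requiring any care are the additivity of the central constants under disjoint union and the observation that the quantized operators on the disjoint union really act term-by-term on a tensor product; both are routine consequences of the explicit formulas for $\widehat{qq},\widehat{qp},\widehat{pp}$ and for $c_0$ recalled in Section~\ref{sec:Virasoro_higher_genus}.
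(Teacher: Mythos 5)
Your argument follows exactly the paper's intended route: combine Theorem~\ref{thm:main} with the Loop Group Covariance of Proposition~\ref{pro:covariance}, then reduce Virasoro for $\cD_B^{\otimes n}$ to Virasoro for $\cD_B$. The paper leaves the last reduction implicit (it simply treats $\cD_B^{\otimes n}$ as the descendant potential of the disconnected fixed locus $E^T = B^{\sqcup n}$), whereas you spell out the separation-of-variables argument --- dividing $\sum_i (L_k\cD_B)(q^{(i)})/\cD_B(q^{(i)})=0$, differentiating in a single block, and using additivity of $\chi$ and $\str(\mu\mu^*)$ to match the central constants --- which is a correct and useful explicitation of that step rather than a different method.
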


\begin{corollary} 
The Virasoro Conjecture holds for the total space of a toric bundle over a base $B$ in any of the following cases: 
\begin{enumerate}
\item the quantum cohomology algebra of $B$ is generically semisimple.
\item $B$ is a compact Riemann surface.
\item $B$ is a $K3$ surface.
\item $B$ is a Calabi-Yau manifold of dimension at least $3$.
\end{enumerate}
\end{corollary}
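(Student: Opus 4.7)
The plan is to apply the previous Corollary, which says that the Virasoro Conjecture for $E$ is equivalent to the Virasoro Conjecture for $B$, and then to verify the latter in each of the four cases by quoting the appropriate result from the literature. Since the reduction from $E$ to $B$ has already been done, the work of this corollary is just to assemble four literature inputs into a single statement.

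First I would dispose of case (1): when the quantum cohomology algebra of $B$ is generically semisimple, the Virasoro Conjecture for $B$ is Teleman's theorem \cite{Teleman}, itself a combination of the Loop Group Covariance Proposition \ref{pro:covariance} with Teleman's classification of semisimple cohomological field theories modulo the action of the twisted loop group, reducing the conjecture for $\cD_B$ to the Witten--Kontsevich theorem \cite{Witten, Kont} for the point target. Next, case (2) with $B$ a compact Riemann surface is covered by Okounkov--Pandharipande \cite{OP}, whose methods proceed via the Gromov--Witten/Hurwitz correspondence and are entirely independent of the loop group formalism; note that $B = \proj^1$ is already covered by (1), but compact curves of genus $\geq 1$ have non-semisimple quantum cohomology and genuinely require the Okounkov--Pandharipande input. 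Finally, for cases (3) and (4), both $K3$ surfaces and Calabi--Yau manifolds of complex dimension at least three satisfy $c_1(T_B) = 0$, so the correction $\rho/z$ in the grading operator $l_0$ is trivial and Getzler's dimensional-vanishing argument \cite{Getzler:Virasoro} immediately yields the Virasoro constraints for $\cD_B$.

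There is no serious obstacle at the level of this corollary itself: each of the four cases is, from the point of view of the present paper, an immediate citation. The genuine content lies entirely in Theorem \ref{thm:main}, namely in proving that the intertwining loop group element $\hat{M}$ is grading-respecting in the sense of Proposition \ref{pro:covariance}; once that is known, Virasoro constraints propagate from the base to the total space of the toric bundle, and the present corollary follows by cataloguing those bases $B$ for which the Virasoro Conjecture is already established. If pressed to flag a subtlety, I would verify that the formulation of the Virasoro Conjecture used in each cited reference is compatible with the formulation underlying Proposition \ref{pro:covariance} and the previous Corollary; since \cite{Teleman, OP, Getzler:Virasoro} all work within compatible conventions, this check is pro forma.
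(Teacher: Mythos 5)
Your proposal is correct and takes exactly the route the paper intends: the corollary is an immediate consequence of the preceding Corollary (Virasoro for $E$ is equivalent to Virasoro for $B$) combined with the known cases listed in \S\ref{sec:Virasoro_higher_genus}, namely Teleman~\cite{Teleman} for semisimple quantum cohomology, Okounkov--Pandharipande~\cite{OP} for curves, and Getzler's dimensional argument~\cite{Getzler:Virasoro} for $K3$ and Calabi--Yau targets of dimension $\geq 3$. The paper in fact gives no separate proof of this corollary, treating it as immediate from those citations, so your catalogue reproduces the intended argument.
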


\noindent Note that in case (2) the quantum cohomology of $B$ is  semisimple only if $B=\proj^1$.  In cases (3) and (4) the quantum cohomology of $B$ is never semisimple.

\section{The Proof of Theorem~\ref{thm:main}}

Our proof of Theorem~\ref{thm:main} consists of two steps: (i) relating $T$-equivariant counterparts of the descendant potential of $E$ and $E^T$ by a loop group transformation $M$; and (ii) establishing the existence of a non-equivariant limit of $M$.  The first step relies on fixed point localization in $T$-equivariant Gromov--Witten theory; for the second step we use Brown's relative mirror theorem for toric bundles \cite{Brown_toric}.

\subsection{Descendant-Ancestor Correspondence} 
\label{sec:descendant_ancestor}
To elucidate step (i), we first recall the {\em descendant/ancestor correspondence} \cite[Appendix~2]{Coates-Givental}:
\[ \cD = e^{F(\tau)} \hat{S(\tau)^{-1}}\ \A (\tau).\] Here $\cD$ is the total descendant, and $\A$ the total {\em ancestor} potential of $E$. The latter is defined \cite{Coates-Givental, Giv_quantization} by replacing the $\psi$-classes in the definition of Gromov--Witten invariants with their counterparts pulled back from Deligne--Mumford spaces by the {\em contraction maps} $\ct \colon \overline{\M}_{g,n+m}(E,d)\to \overline{\M}_{g,n}$, and also inserting a primary class $\tau\in H$ at each of the $m$ free marked points (which results in the dependence of $\A$ on the parameter $\tau$). The function $F$ in the exponent is the potential for primary (no descendants) genus-$1$ Gromov--Witten invariants. The operator $S$ is lower-triangular (i.e.~represented by a series in $z^{-1}$) and is uniquely determined by the overruled Lagrangian cone $\L_E$, as follows.  Each tangent space to $\L_E$ is tangent to $\L_E$ at a point of the form $-1z+\tau+\bigO(1/z)$, and thus the tangent spaces depend on a parameter $\tau$. For a tangent space $T_{\tau}$, the $\Nov[z]$-linear projection $\H_{+}\to T_{\tau}$ along $\H_{-}$ determines (and is determined by) the lower-triangular loop group element $S(\tau)^{-1} \colon H \subset \H_{+} \to T_{\tau} \subset \H$.

The operator $S$ can be expressed in terms of Gromov--Witten invariants and thus it is subject to dimensional constraints. This guarantees that conjugation by $S$ respects the grading operators, i.e.~transforms the grading operator $l_0$ for Gromov--Witten theory into the grading operator for the ancestor theory. Thus it remains to find another grading-preserving loop group transformation relating $e^{F(\tau)}\A (\tau)$ with $\cD_{E^T}$.

\subsection{Fixed-Point Localization} 
\label{sec:localization_outline}

In the torus-equivariant version of Gromov--Witten theory, there exists an ``upper-triangular'' element $R (\tau)$ of the loop group which provides the following relationship between suitable ancestor potentials of the target space $E$ and its fixed point locus $E^T$: 
\begin{equation}
  \label{eq:main_localization_formula}
  \A^{\eq}(\tau) = \hat{R(\tau)} \ \prod_{\alpha \in F} \A^{\alpha,\tw}_{B}(u_\alpha(\tau))
\end{equation}
Here $\A^{\eq}$ refers to the total ancestor potential of $E$ in $T$-equivariant Gromov--Witten theory (of which we are reminded by the superscript $eq$), and $\A^{\alpha,\tw}_{B}$ is a similar ancestor potential of one component $E^\alpha =  B$ of the fixed point locus $E^T$. The superscript $tw$ indicates that we are dealing here not with the Gromov--Witten theory of $E^{\alpha}=B$ {\em per se}, but with the Gromov--Witten theory of the normal bundle of $E^{\alpha}$ in $E$; this is the \emph{local} theory\footnote{This is necessarily an equivariant theory, as the target space is non-compact.} of $E^\alpha$, or in other words the \emph{twisted} theory of $B$. The product over the fixed point set $F=X^T$ means that each function depends on its own group of variables according to the decomposition $H_T:=H_T^*(E;\Nov) = \oplus_{\alpha \in F} H_T^*(E^{\alpha};\Nov)$ of equivariant cohomology induced by the embedding $E^T\subset E$ and localization. Let us write $\tau = \oplus_{\alpha} \tau_{\alpha}$ using the same decomposition.  The quantities $u_\alpha(\tau)$ in \eqref{eq:main_localization_formula} are certain block-canonical coordinates on $H_T$, defined in \S\ref{sec:genus_zero} below, which have the property that $u_\alpha(\tau) \equiv \tau_\alpha$ modulo Novikov variables. The operator $R(\tau)$ here is a power series in $z$, and it does not have a non-equivariant limit.  The existence of the operator $R(\tau)$ and the validity of formula \eqref{eq:main_localization_formula} are established in \S\ref{sec:localization} below.

Next, the ancestor potential of the normal bundle of $E^\alpha$ in $E$ is related to the corresponding descendant potential by the equivariant version of the descendant/ancestor correspondence: 
\begin{align*}
  \A^{\alpha,\tw}_{B} (u_\alpha(\tau)) = e^{-F^{\alpha,\tw}_{B}(u_\alpha(\tau))} \hat{S_{B}^{\alpha,\tw}(u_\alpha(\tau))} \cD_{B}^{\alpha,\tw}
  &&
  \text{for each $\alpha \in F$.}
\end{align*}
Finally, according to the Quantum Riemann--Roch Theorem\footnote{To simplify discussion, we omit a constant factor in the quantum Riemann-Roch formula.} \cite{Coates-Givental}:
\begin{equation}
  \label{eq:QRR}
  \cD_{B}^{\alpha,\tw} = \widehat{\Gamma_{\alpha}^{-1}} \cD_{B}^{\eq}
\end{equation}
That is, the twisted equivariant descendant potential is obtained from the
untwisted equivariant  descendant potential of the fixed point manifold 
(i.e.~$|F|$ copies of the base $B$ of the toric bundle in our case) by quantized
loop group transformations. The operators involved have the form
\[ 
\Gamma_{\alpha}^{-1} =\exp \left(\sum_{m\geq 0} \rho_m^{\alpha} z^{2m-1}\right)
\] 
where $\rho_m^{\alpha}$ are certain operators of multiplication in the classical equivariant cohomology of $E^{\alpha}$; see~\cite{Coates-Givental} for the precise definition.

Composing the above transformations, we obtain 
\begin{align*}
e^{F^{\eq}(\tau)}\A^{\eq}(\tau) =\hat{M(\tau)} \prod_{\alpha \in F} \cD^{\eq}_B
&&
\text{where}
&&
\hat{M(\tau)}=e^{F^{\eq}(\tau)-\sum_{\alpha} F^{\alpha,\tw}_{B}(u_\alpha(\tau))} \hat{R}(\tau) 
\left(\bigoplus_{\alpha\in F} \hat{S_{B}^{\alpha,\tw}}\big(u_{\alpha}(\tau)\big) \hat{\Gamma_{\alpha}^{-1}}\right)
\end{align*}
Both $e^{F^{\eq}(\tau)}\A^{\eq}(\tau)$ and $\prod_{\alpha\in F} \cD_{B}^{\eq}$ have non-equivariant limits, 
but some ingredients of the operator $M(\tau)$ do not.  Nonetheless, we prove:

\begin{claim*}
  The operator $M(\tau)$ has a well-defined non-equivariant limit, which is grading-preserving.
\end{claim*}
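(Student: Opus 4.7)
The strategy is to identify $M(\tau)$ with a loop group operator that is manifestly non-equivariant. Individually, $\hat R(\tau)$, $\hat S_B^{\alpha,\tw}$ and $\hat\Gamma_\alpha^{-1}$ all have poles in the $T$-equivariant parameters: $\hat\Gamma_\alpha^{-1}$ involves inverse weights of the normal bundle to $E^\alpha\subset E$, and the other two inherit equivariant denominators from fixed-point integration. The claim thus amounts to a cancellation of these denominators in the composition.

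First I would treat the genus-zero skeleton of $M(\tau)$. The genus-zero descendant potential of $E$ is encoded by the overruled cone $\L_E$, and Brown's relative mirror theorem \cite{Brown_toric} gives an explicit hypergeometric $I$-function for $E$ built from the $J$-function of $B$ and the Chern roots of $L_1,\dots,L_N$, together with a mirror map. The crucial feature is that the $I$-function and mirror map are non-equivariant by construction, so the genus-zero data of $E$ admits an a priori non-equivariant description in terms of that of $B$. On the equivariant side, the operators $S^{\alpha,\tw}_B(u_\alpha(\tau))$ and $\Gamma_\alpha^{-1}$ are determined by the equivariant genus-zero data of the twisted theory on each $E^\alpha$ (the former as the $\Nov[z]$-linear projection of a tangent space to $\L^{\eq}$ along $\H_-$, the latter by Quantum Riemann-Roch applied to the normal bundle of $E^\alpha\subset E$). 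Matching these two descriptions of the same genus-zero object forces the product $\hat R(\tau)\bigoplus_\alpha \hat S_B^{\alpha,\tw}(u_\alpha(\tau)) \hat\Gamma_\alpha^{-1}$ to coincide, in the non-equivariant limit, with the loop group element extracted from Brown's theorem (modulo the descendant-ancestor correction on $B$).

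For the higher-genus content, the equivariant GW theory of $E$ is generically semisimple because the fiber is toric, so the $R$-matrix $R(\tau)$ is uniquely determined by the equivariant Frobenius structure and the grading operator, by Givental-Teleman. Once the genus-zero identification is in place, this uniqueness, combined with Brown's non-equivariant description, pins down the non-equivariant limit of $R(\tau)$. The scalar factor $\exp(F^{\eq}(\tau)-\sum_\alpha F^{\alpha,\tw}_B(u_\alpha(\tau)))$ is controlled by the Dubrovin-Zhang/Getzler formula expressing the genus-one primary potential of a semisimple theory in terms of the $R$-matrix and canonical coordinates; existence of the non-equivariant limit for $R(\tau)$ then implies the same for the scalar factor.

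Grading preservation follows from Proposition~\ref{pro:covariance}: in the non-equivariant limit both $\cD_E$ and $\cD_B^{\otimes n}$ satisfy the grading constraint $L_0\cD=0$ (Hori's equation), and the central constants match because $\chi(E) = n\chi(B)$ and $\str(\mu\mu^*)$ is additive with respect to the decomposition $H^*(E)=\bigoplus_\alpha H^*(B)$, so $c_0^E = n\,c_0^B$. The argument used in the proof of covariance then shows that $M$ must conjugate the grading operator of $E^T$ into that of $E$. The main obstacle I anticipate is step one above: rigorously comparing the equivariant factorization of $M(\tau)$ with Brown's non-equivariant $I$-function, and in particular checking that the block-canonical coordinates $u_\alpha(\tau)$ are compatible with Brown's mirror map in the non-equivariant limit.
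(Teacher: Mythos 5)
Your proposal correctly identifies Brown's relative mirror theorem as the engine behind the non-equivariant limit, and your intuition that the equivariant poles must cancel in the composition is on the right track.  However, there are several genuine gaps.

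First, the statement ``this uniqueness, combined with Brown's non-equivariant description, pins down the non-equivariant limit of $R(\tau)$'' does not close the argument.  The paper is explicit that $R(\tau)$ itself does \emph{not} admit a non-equivariant limit; only after Birkhoff-factorizing $S^{\alpha,\tw}(u_\alpha)\,\Gamma_\alpha^{-1} = R_\alpha(u_\alpha)\,S_B(\tau^*_\alpha)$ and absorbing the upper-triangular pieces into a new operator $R'(\tau) = R(\tau)\bigl(\oplus_\alpha R_\alpha(u_\alpha(\tau))\bigr)$ does the relevant quantity admit a limit.  Moreover, uniqueness of the object at $\lambda = 0$ does not imply that the equivariant family converges there.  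The paper's actual mechanism is analytic: the expansion of $\Gamma_\block(-z)\,T(z)$ near $z = 0$ is identified with the termwise $q$-series stationary phase expansion of the oscillating integrals for the fiber mirror $W=\sum_j (x_j+\lambda_j\log x_j)$, the critical points of $W$ remain non-degenerate as $\lambda_j\to 0$ along a generic path, and so the asymptotic coefficients $\Psi_k$ and critical values $w_\alpha$ depend analytically on $\lambda$ and extend to $\lambda=0$.  This analyticity-of-asymptotics argument, which is the crux of the proof, is missing from your proposal.  Relatedly, your assertion that ``the equivariant GW theory of $E$ is generically semisimple because the fiber is toric, so the $R$-matrix is uniquely determined \dots by Givental--Teleman'' is not usable here: it is the quantum cohomology of the \emph{fiber} $X$ (and of $E$ over the localized coefficient ring) that is semisimple; the non-equivariant quantum cohomology of $E$ need not be (e.g.\ for Calabi--Yau $B$), so Givental--Teleman does not apply to non-equivariant $E$, and in the equivariant theory the grading is unavailable, which is precisely what Teleman's uniqueness uses.

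Second, the grading-preservation argument is logically reversed.  Proposition~\ref{pro:covariance} takes as a hypothesis that $l_0'' = M\,l_0'\,M^{-1}$ and deduces that the central constants $c_k$ vanish.  You want the converse: from $L_0^E\,\cD_E = 0$, $L_0^{E^T}\,\cD_B^{\otimes n} = 0$, $\cD_E = \hat M\,\cD_B^{\otimes n}$, and $c_0^E = n\,c_0^B$, conclude that $M\,l_0(E^T)\,M^{-1} = l_0(E)$.  This does not follow: $(\hat M\,L_0^{E^T}\,\hat M^{-1} - L_0^E)\cD_E = 0$ says only that a certain differential operator annihilates $\cD_E$, not that the operator is zero.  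The paper instead verifies the intertwining relation directly, by proving the commutation identities \eqref{eq:commutation_T} and \eqref{eq:commutation_I} via the Euler vector field, the divisor equation, and the homogeneity of $I_E$ and of the oscillating integrals, and composing these to pass from $l_0(E) = z\,d/dz + \mu_E + c_1(E)/z$ to $l_0(B) = z\,d/dz + \mu_B + c_1(B)/z$.  That explicit computation cannot be replaced by an appeal to Proposition~\ref{pro:covariance}.
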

 
\noindent This implies Theorem~\ref{thm:main}.

\subsection{Why Does The Limit Exist?}
To understand why the non-equivariant limit of $M(\tau)$ exists, consider the equivariant descendant/ancestor relation:
\[
e^{F^{\eq}(\tau)}\A^{\eq}(\tau) = \hat{S^{\eq}}(\tau) \cD^{\eq}
\]
The descendant potential here does not depend on the parameter $\tau \in H_T$.  The upper-triangular loop group element $S^{\eq}(\tau)$ is a fundamental solution to the (equivariant) quantum differential equations\footnote{This is the equivariant version of the Dubrovin connection.}  for $E$:
\begin{align*}
  z \partial_v S (\tau) & = v \bullet_{\tau} S (\tau)
  &
  v \in H_T \\
  \intertext{where $\bullet_{\tau}$ is the equivariant big quantum product with parameter $\tau \in H_T$. 
    Therefore the function $e^{F^{\eq}}\A^{\eq}$ depends on $\tau$ in the same way, i.e.~satisfies:}
  \partial_v \Big(e^{F^{\eq}(\tau)}\A^{\eq}(\tau)\Big) &= \hat{\left(\frac{v\bullet_{\tau}}{z}\right)} \Big(e^{F^{\eq}(\tau)}\A^{\eq}(\tau)\Big)
  & 
  v \in H_T. \\
  \intertext{On the other hand, $e^{F^{\eq}}\A^{\eq}=\hat{M(\tau)}\prod_{\alpha\in F} \cD_{B}^{\eq}$ and $\prod_{\alpha\in F} \cD_{B}^{\eq}$ does not depend on $\tau$. Therefore the dependence of $M$ on $\tau$ is governed by the same connection:}
  z\partial_v M (\tau) & = v\bullet_{\tau} M(\tau)
  & v \in H_T.
\end{align*}

The main result of the paper \cite{Brown_toric} about toric bundles provides, informally speaking, a fundamental solution to this connection for the total space $E$ of a toric bundle assuming that the fundamental solution for the base $B$ is known. The solution is given in the form of an oscillating integral---the ``equivariant mirror'' to the toric fiber $X$. The proof of the above Claim is based on the identification of $M$ with another form of this solution, obtained by replacing the oscillating integrals with their {\em stationary phase asymptotics}. In the non-equivariant limit of the oscillating integrals, the stationary phase asymptotics tend to a well-defined limit because the critical points of the phase function remain well-defined and non-degenerate in the limit -- this is equivalent to the semisimplicity of the non-equivariant quantum cohomology algebra of the toric fiber~\cite{Iritani}. We explain this in detail in \S\ref{sec:non-equivariant_limit} below.  In the next section, we discuss localization in $T$-equivariant Gromov--Witten theory of toric bundles.

\section{Fixed-Point Localization}
\label{sec:localization}

\subsection{The $T$-Action on $E$}
\label{sec:more_toric_bundles}

Let $\pi \colon E \to B$ be a toric bundle with fiber $X$, constructed as in \S\ref{sec:toric_bundles}.  Let $\mathfrak{k}$ and $\mathfrak{t}$ denote the Lie algebras of $K = (S^1)^k$ and $T = (S^1)^N$ respectively.  Our assumption that $\rk H^2(X)=k$ implies\footnote{See \cite{Audin} or \cite{Giv_toric} for details.} that there is a canonical isomorphism $\mathfrak{k}^\vee \cong H^2(X;\RR)$, and so the symplectic form on the toric fiber $X$ determines a point $\omega \in \mathfrak{k}^\vee$.  The embedding $K \to T$ determines and is determined by a linear map $\mathfrak{k} \to \mathfrak{t}$; this map is given by a $k \times N$ matrix with integer entries $(m_{ij})_{1 \leq i \leq k,1 \leq j \leq N}$.  The columns of $(m_{ij})$ determine elements $D_1,\ldots,D_N \in H^2(X;\RR)$, the toric divisors on $X$.  

As discussed above, the total space $E$ of the toric bundle carries a fiberwise left action of the big torus~$T$.  The $T$-fixed set $E^T$ consists of $n = \rk H^*(X)$ copies of $B$, which are the images of sections of $\pi$.  The $T$-fixed points on $X$, and also the $T$-fixed strata on $E$, are indexed by subsets $\alpha \subset \{1,2,\ldots,N\}$ of size $k$ such that $\omega$ lies in the cone spanned by $\{D_i : i \in \alpha\}$.  Denote the set of all such subsets $\alpha$ by $F$.  Given such a subset $\alpha \in F$, we write $x^\alpha$ for the corresponding $T$-fixed point in $X$ and $E^\alpha$ for the corresponding $T$-fixed stratum in $E$.  One-dimensional $T$-orbits on $E$ are non-isolated (unless $B$ is a point), and components of this space are indexed by one-dimensional $T$-orbits on $X$.  A one-dimensional $T$-orbit in $X$ that connects $x^\alpha$ to $x^\beta$ corresponds to the component of the space of one-dimensional orbits in $E$ consisting of orbits that connect $E^\alpha$ to $E^\beta$; this component is again a copy of the base~$B$.  We write $\beta \to \alpha$ if there is a one-dimensional $T$-orbit in $X$ from $x^\alpha$ to $x^\beta$.  
For each $\beta \in F$ such that $\beta \to \alpha$ there is a line bundle over $E^\alpha \cong B$ formed by tangent lines (at $E^\alpha)$ to closures of the $1$-dimensional orbits connecting $E^\alpha$ to $E^\beta$; we denote the first Chern class of this line bundle by $\chi_{\alpha\beta}$.

\subsection{The $T$-Action on the Moduli Space of Stable Maps}
\label{sec:define_Gamma}

In this section and the next, we describe the technique of fixed point localization on moduli spaces of stable maps. This material, which is well-known, is included for completeness, but we will need very little of it in what follows.  In what comes afterwards, torus-invariant stable maps will be chopped into ``macroscopic'' pieces. What one needs to digest from the ``microscopic'' description given here is that the moduli spaces of torus-invariant stable maps factor acording to the pieces, that integrals over the factors are naturally assembled into appropriate Gromov--Witten invariants, and that the only integrands which do not behave multiplicatively with respect to the pieces are the ``smoothing factors'' defined below.  The impatient reader should therefore skip straight to \S\ref{sec:genus_zero}, pausing only to examine the explicit forms of the smoothing factors, which are given just after equation \eqref{eq:Euler}.

Let $E_{g,n,d}$ denote the moduli space of degree-$d$ stable maps to $E$ from curves of genus~$g$ with $n$~marked points; here $g$ and $n$ are non-negative integers and $d \in H_2(E;\ZZ)$.  The action of $T$ on $E$ induces an action of $T$ on $E_{g,n,d}$.  
A stable map $f \colon C \to E_{g,n,d}$ representing a $T$-fixed point in $E_{g,n,d}$ necessarily has $T_{\CC}$-invariant image, which lies therefore in the union of $0$- and $1$-dimensional $T_{\CC}$-orbits in $E$.  More precisely, some rational irreducible components of $C$ are mapped onto $1$-dimensional $T_{\CC}$-orbit closures as multiple covers $z \mapsto z^k$ (in obvious coordinates).  We call them \emph{legs} of multiplicity $k$.  After removing the legs, the domain curve $C$ falls into  connected components $C_v$, and the restrictions $f |_{C_v}$ are stable\footnote{Marked points on $C_v$ here are marked points from $C$ and the attaching points of legs.} maps to $E^T$ which we call \emph{stable pieces}.  Note that each leg maps $z=0$ and $z=\infty$ to the $T$-fixed locus $E^T \subset E$, and each of $z=0$ and $z=\infty$ is one of:
\begin{enumerate}
\item a node, connecting to a stable piece;
\item a node, connecting to another leg;
\item a marked point; 
\item an unmarked smooth point.
\end{enumerate}
We associate to possibilities (1)--(4) vertices of $\Gamma$, of types 1--4 respectively, thereby ensuring that each leg connects precisely two vertices.  The combinatorial structure of a $T$-fixed stable map can be represented by a decorated graph $\Gamma$, with vertices as above and an edge for each leg.  The edge $e$ of $\Gamma$ is decorated by the multiplicity $k_e$ of the corresponding leg.  The vertex $v$ of $\Gamma$ is decorated with $(\alpha_v,g_v,n_v,d_v)$ where $\alpha_v \in F$ is the component of the $T$-fixed set determined by $v$, and $(g_v,n_v,d_v)$ record the genus, number of marked points, and degree of the stable map $f|_{C_v}$ for vertices of type~$1$, and:
\[
(g_v,n_v,d_v) = 
\begin{cases}
  (0,2,0) & \text{for type 2} \\
  (0,1,0) & \text{for type 3} \\
  (0,0,0) & \text{for type 4.} 
\end{cases}
\]
As a $T$-fixed stable map varies continuously, the combinatorial type $\Gamma$ does not change.  Each connected component\footnote{There are many such components corresponding to each decorated graph $\Gamma$, which differ by the numbering of the marked points assigned to each vertex.} is isomorphic to a substack of:
\begin{equation}
  \label{eq:T_fixed_component}
  \left(\prod_{\text{vertices $v$ of $\Gamma$}}
    B_{g_v,n_v,d_v}
  \right) \Big/ \left(\Aut(\Gamma) \times \prod_{\text{edges $e$ of $\Gamma$}} \ZZ/k_e\ZZ \right)
\end{equation}
Here $B_{g_v,n_v,d_v}$ is the moduli space of stable maps to $B$, and the `missing' moduli spaces $B_{0,2,0}$, $B_{0,1,0}$, and $B_{0,0,0}$ are taken to be copies of $B$.  The substack is defined by insisting that, for each edge $e$ between vertices $v$,~$w$ of $\Gamma$, the evaluation maps
\begin{align*}
  \ev_e \colon B_{g_v,n_v,d_v} \to B && \text{and} && \ev_e \colon B_{g_{w},n_{w},d_{w}} \to B
\end{align*}
determined by the edge $e$ have the same image.  These constraints reflect the fact that the one-dimensional orbit in $E$ determined by the edge $e$ runs along a fiber of the toric bundle $E \to B$.

\subsection{Virtual Localization}  We will compute Gromov--Witten invariants of $E$ by virtual localization.  The Localization Theorem in equivariant cohomology states that, given a holomorphic action of a complex torus $T_{\CC}$ on a compact complex manifold $M$ and $\omega \in H^\bullet_T(M)$, we have
\[
\int_{[M]} \omega = \int_{[M^T]} \frac{i^* \omega}{\Euler(N_{M^T})}
\]
where $i \colon M^T \to M$ is the inclusion of the $T$-fixed submanifold, $N_{M^T}$ is the normal bundle to $M^T$ in $M$, $\Euler$ denotes the $T$-equivariant Euler class, and the integrals denote the evaluation of a $T$-equivariant cohomology class against the $T$-equivariant fundamental cycle.  According to Graber--Pandharipande~\cite{Graber-Pandharipande}, the same formula holds when $M$ is the moduli space of stable maps to a smooth projective $T$-variety; $[M]$ and $[M^T]$ denote $T$-equivariant virtual fundamental classes~\cite{Li--Tian,Behrend--Fantechi}; and $N_{M^T}$ denotes the virtual normal bundle to $M^T$, that is, the moving part of the virtual tangent bundle to $M$ restricted to $M^T$.  To apply this virtual localization formula, we need to describe $[M^T]$ and $\Euler(N_{M^T})$.  The $T$-fixed components~\eqref{eq:T_fixed_component} come equipped with virtual fundamental classes from the Gromov--Witten theory of $B$, and these give the virtual fundamental class $[M^T]$.  We next describe $\Euler(N_{M^T})$.

Consider a connected component \eqref{eq:T_fixed_component} consisting of stable maps with combinatorial type $\Gamma$.  The analysis of the virtual tangent bundle to the moduli space of stable maps in~\cite{Graber-Pandharipande,Kontsevich:enumeration} shows that $\Euler(N_{M^T})$ takes the form:
\begin{equation}
  \label{eq:Euler}
  \Euler(N_{M^T}) = C_{\text{\rm smoothing}} \, C_{\text{\rm vertices}} \, C_{\text{\rm edges}} 
\end{equation}
Here the factor $C_{\text{\rm smoothing}}$ records the contribution from deformations which smooth nodes in the $T$-fixed stable maps of type $\Gamma$.  To each type-$1$ flag, that is, each pair $(v,e)$ where $v$ is a type-$1$ vertex and $e$ is an edge of $\Gamma$ incident to $v$, there corresponds a $1$-dimensional smoothing mode which contributes
\[
\frac{\ev_e^* \chi_{\alpha,\beta}}{k_e} - \psi_e
\]
to $C_{\text{\rm smoothing}}$.  Here $\psi_e$ is the cotangent line class (on $B_{g_v,n_v, d_v}$) at the marked point determined by $e$.  To each type-$2$ flag, there corresponds a $1$-dimensional smoothing mode which contributes
\[
\frac{\chi_{\alpha,\beta_1}}{k_1} + \frac{\chi_{\alpha,\beta_2}}{k_2} 
\]
to $C_{\text{\rm smoothing}}$, where the legs incident to the type-2 vertex connect $E^\alpha$ to $E^{\beta_1}$ and~$E^{\beta_2}$, with multiplicities $k_1$ and $k_2$ respectively.  Flags of types~3 and~4 do not contribute to $C_{\text{\rm smoothing}}$.

The factor $C_{\text{\rm vertices}}$ in \eqref{eq:Euler} is a product over the vertices of $\Gamma$.  A type-$1$ vertex contributes the $T$-equivariant Euler class of the virtual vector bundle $N^{\alpha_v}_{g_v,n_v,d_v}$ over $B_{g_v,n_v,d_v}$ with fiber at a stable map $f \colon C \to B$ given by\footnote{It is shown in Appendix~1 in~\cite{Coates-Givental} that $N^{\alpha_v}_{g_v,n_v,d_v}$ is a well-defined element of $K^0_T(B_{g_v,n_v,d_v})$.}:
\[
H^0(C,f^* N^{\alpha_v}) \ominus H^1(C,f^* N^{\alpha_v})
\]
Here $N^{\alpha_v} \to B$ is the normal bundle to $E^{\alpha_v} \cong B$ in $E$.  Vertices of type $2$,~$3$, and~$4$ contribute, respectively:
\begin{align*}
  \Euler_T(N^{\alpha_v}), &&  \Euler_T(N^{\alpha_v}), &&  \text{and} && \frac{\Euler_T(N^{\alpha_v})}{\chi_{\alpha_v,\beta}/k_e},
\end{align*}
where the leg terminating at a type-4 vertex has multiplicity $k_e$ and connects $E^{\alpha_v}$ to $E^\beta$. 

 The factor $C_{\text{\rm edges}}$ in \eqref{eq:Euler} contains all other contributions from the moving part of the virtual tangent bundle.  We will not need the explicit formula in what follows, but include it here for completeness.  
The factor $C_{\text{\rm edges}}$ is a product over edges of $\Gamma$.  An edge of multiplicity $k$ connecting fixed points indexed by $\alpha$,~$\beta \in F$ contributes
\[
\prod_{j=1}^N 
\frac{
  \prod_{m=-\infty}^0 \big( U_j(\alpha) + m \frac{\chi_{\alpha, \beta}}{k} \big)
}{
  \prod_{m=-\infty}^{D_j \cdot d - 1} \big( U_j(\alpha) + m \frac{\chi_{\alpha, \beta}}{k} \big)
}
\]
where $U_j(\alpha) \in H^2_T(E)$ is the $T$-equivariant class of the $j$th toric divisor, restricted to the fixed locus $E^\alpha$, and $D_j \cdot d$ is the intersection index of that divisor with the degree of the corresponding multiply-covered $1$-dimensional $T$-orbit. See~\cite{Giv_toric} and \cite{Brown_toric} for details.

\subsection{Virtual Localization in Genus Zero}
\label{sec:genus_zero}
Consider the following generating functions for genus-zero Gromov--Witten invariants:
\begin{itemize}
\item the $J$-function $J(\tau,z) \in H^\bullet_T(E)$ defined by $J(\tau,z) = \sum_\mu J(\tau,z)^\mu \phi_\mu$ where:
\begin{equation}\label{def:J}
J(\tau,z)^\mu = 
(1,\phi^\mu) z + \tau^\mu + 
\sum_{d \in H_2(E)} \sum_{n=0}^\infty 
\frac{Q^d}{n!}
\correlator{\tau,\tau,\ldots,\tau,\frac{\phi^\mu}{z-\psi_{n+1}}}_{0,n+1,d}^E
\end{equation}
\item the fundamental solution $S(\tau,z) \colon H^\bullet_T(E) \to H^\bullet_T(E)$ defined by:
  \begin{equation}
    \label{eq:S}
    {S(\tau,z)^\mu}_\nu
    =
    (\phi^\mu,\phi_\nu) + 
    \sum_{d \in H_2(E)} \sum_{n=0}^\infty 
    \frac{Q^d}{n!}
    \correlator{\phi^\mu,\tau,\ldots,\tau,\frac{\phi_\nu}{z-\psi_{n+2}}}_{0,n+2,d}^E
  \end{equation}
\item the bilinear form $V(\tau,w,z)$ on $H^\bullet_T(E)$ defined by:
  \begin{equation}
    \label{eq:V}
    V(\tau,w,z)_{\mu \nu} = 
    \frac{(\phi_\mu,\phi_\nu)}{w+z} + \sum_{d \in H_2(E)} \sum_{n=0}^\infty 
    \frac{Q^d}{n!}
    \correlator{\frac{\phi_\mu}{w-\psi_1},\tau,\ldots,\tau,\frac{\phi_\nu}{z-\psi_{n+2}}}_{0,n+2,d}^E
  \end{equation}
\end{itemize}
Here $\phi_1,\ldots,\phi_{\rk H_T^\bullet(E)}$ and $\phi^1,\ldots,\phi^{\rk H_T^\bullet(E)}$ are bases for $H^\bullet_T(E)$ that are dual with respect to the $T$-equivariant Poincar\'e pairing on $E$, endomorphisms $M$ of $H^\bullet_T(E)$ have matrix coefficients such that $M(\phi_\nu) = \sum_\mu {M^\mu}_\nu \phi_\mu$, and bilinear forms $V$ on $H^\bullet_T(E)$ have matrix coefficients such that $V(\phi_\mu,\phi_\nu) = V_{\mu \nu}$.

 The fundamental solution $S(\tau,z)$ satisfies the $T$-equivariant quantum differential equations:
\begin{align}\label{eqn:QDE}
  z \partial_v S (\tau,z) = v \bullet_{\tau} S (\tau,z)
  &&
  v \in H_T^\bullet(E) 
\end{align}
together with the normalization condition $S(\tau,z) = \Id + \bigO(z^{-1})$.  Standard results in Gromov--Witten theory imply that:
\begin{align}
  \label{eq:JSV}
  J(\tau,z) = z S(\tau,z)^* 1 && \text{and} && V(\tau,w,z) = \frac{S(\tau,w)^* S(\tau,z)}{w+z}
\end{align}
where $S(\tau,z)^*$ denotes the adjoint of $S(\tau,z)$ with respect to the equivariant Poincar\'e pairing on $H_T^\bullet(E)$, and we identify $H^\bullet_T(E)$ with its dual space via the equivariant Poincar\'e pairing (thus equating the bilinear form $V$ with an operator).  The analogous statements hold in the Gromov--Witten theory of $B$ twisted by the normal bundle $N^\alpha$ to the $T$-fixed locus $E^\alpha \cong B$.  

We begin by processing $S(\tau,z)$ by fixed-point localization.  Henceforth we work over the field of fractions of the coefficient ring $H^\bullet_T(pt) = H^\bullet(BT)$ of $T$-equivariant cohomology theory, and insist that our basis $\phi_1,\ldots,\phi_{\rk H_T^\bullet(E)}$ for $H^\bullet_T(E)$ is compatible with the fixed-point localization isomorphism
\[
H^\bullet_T(E) \longrightarrow \bigoplus_{\alpha \in F} H^\bullet_T(E^\alpha)
\]
in the sense that each $\phi_i$ restricts to zero on all except one of the $T$-fixed loci, which we denote by $E^{\alpha_i} \subset E$.

\begin{proposition} \label{pro:R}
  The fundamental solution $S(\tau,z)$ can be factorized as the product
  \begin{equation}
    \label{eq:block_product}
    S(\tau,z) = R(\tau,z) \, S_{\block}(\tau,z)
  \end{equation}
  where $R$ has no pole at $z=0$, and $S_{\block}(\tau,z)$ is the block-diagonal transformation
    \[
    S_{\block}(\tau,z) = \bigoplus_{\alpha \in F} S^{\alpha, \tw}\big(u_\alpha(\tau),z\big).
    \]
    Here $S^{\alpha, \tw}(u,z)$ is the fundamental solution in the  Gromov--Witten theory of $B$ twisted by the normal bundle $N^\alpha$ to the $T$-fixed locus $E^\alpha \cong B$, and  $\tau \mapsto \oplus_\alpha u_\alpha(\tau)$ is a certain non-linear change of coordinates with $u_\alpha(\tau) \in H^\bullet(E^\alpha) \subset H^\bullet(E)$.
\end{proposition}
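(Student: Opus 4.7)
The plan is to apply virtual fixed-point localization to the correlators in \eqref{eq:S} defining the matrix entries ${S(\tau,z)^\mu}_\nu$, and reorganize the resulting sum over decorated graphs so as to exhibit the factorization $S = R \cdot S_{\block}$. A preliminary observation: by our choice of basis the class $\phi_\nu$ is supported on a single fixed locus $E^{\alpha_\nu}$, so in every contributing graph the last marked point (carrying the insertion $\phi_\nu/(z-\psi_{n+2})$) must sit either on a stable piece at $E^{\alpha_\nu}$ (type-1 vertex) or at the endpoint of a leg meeting $E^{\alpha_\nu}$ (type-3 vertex). I would accordingly stratify the sum over graphs by the combinatorial type of this distinguished ``special'' vertex $v_*$ and by the set of legs incident to it.

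Next, I would factor each graph-summand into a \emph{central} piece, consisting of the integral over $B_{0,n_*,d_*}$ attached to $v_*$ together with the smoothing, vertex, and edge contributions at the legs incident to $v_*$, and a \emph{bulk} piece consisting of everything else (including the $\phi^\mu$-marked point and its own distinguished vertex). Summing the central piece over $(n_*, d_*)$ and over all tree-like subconfigurations attached to $v_*$ via its legs --- each such tree contributing an additional $H^\bullet(E^\alpha)$-valued marked-point insertion that effectively shifts the base parameter --- should produce, for each $\alpha \in F$, the expression $S^{\tw,\alpha}(u_\alpha(\tau),z)\,\phi_\nu$. The co-ordinate change $u_\alpha(\tau) = \tau|_{E^\alpha} + O(Q)$ is precisely what is needed to absorb these tree corrections. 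The remaining bulk contributions then assemble into the matrix entries of $R(\tau,z)$ applied to $S^{\tw,\alpha}(u_\alpha(\tau),z)\phi_\nu$, giving \eqref{eq:block_product}.

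The main obstacle, I expect, is verifying that $R$ is regular at $z=0$ and that it is genuinely a loop group element. Regularity at $z=0$ should follow from the fact that the only $1/z$ factor in the localized integrand is the $1/(z-\psi_{n+2})$ at $v_*$, which is absorbed into $S^{\tw,\alpha}$; all other contributions --- leg smoothing factors of the form $1/(\chi_{\alpha\beta}/k - \psi_e)$, vertex Euler classes, and the edge factors recalled in \S\ref{sec:define_Gamma} --- are manifestly regular in $z$. Establishing that the resulting $R$ is symplectic (i.e.\ lies in the twisted loop group) is cleanest if one first performs the analogous factorization for the symmetric bilinear form $V(\tau,w,z)$ of \eqref{eq:V}: an argument by the same graph-surgery should produce
\[
V(\tau,w,z) \;=\; \frac{S_{\block}(\tau,w)^*\, R(\tau,w)^*\, R(\tau,z)\, S_{\block}(\tau,z)}{w+z},
\]
which by \eqref{eq:JSV}, the normalization $S = \id + O(z^{-1})$, and the uniqueness of a Birkhoff-type factorization into a $z$-regular factor times a $z^{-1}$-series yields the desired decomposition for $S$, with $R(-z)^*R(z)=\id$ built in.
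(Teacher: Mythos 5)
Your plan — localize $S$, stratify by graph type, factor off a head-vertex (``central'') piece from the rest — matches the paper's skeleton. However, there is a genuine gap at the pivotal step. You assert that summing the central piece (the head-vertex integral together with all ends and all incident smoothing factors) produces $S^{\tw,\alpha}(u_\alpha(\tau),z)\phi_\nu$, with the bulk assembling into $R$. But after localization the head-vertex integral carries descendant classes at \emph{two} special marked points: $\psi_{m+2}$ contracted with $\phi_\nu/(z-\psi_{m+2})$, and $\psi_1$ contracted with the tail-smoothing factor $1/(\chi_{\alpha\beta}/k - \psi_1)$. So the central piece is not a vector but a $\VV$-form-like bilinear object whose $z$- and $\chi_{\alpha\beta}/k$-dependencies are intertwined inside a single moduli integral, and separating them is exactly where the real content of the proof lies. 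The paper does this by recognizing the localized head-vertex sum as the $\VV$-form describing the Lagrangian cone $\cL^{\alpha,\tw}$ at the point $\bt = \epsilon_\alpha$, invoking the identity $\VV(\bt,w,z) = \tfrac{1}{w+z} S^{\tw,\alpha}(u_\alpha,w)^* S^{\tw,\alpha}(u_\alpha,z)$ — valid because the tangent space at $\bt$ equals $S^{\tw,\alpha}(u_\alpha,z)^{-1}\cH_+$ — and identifying $u_\alpha$ via the Dijkgraaf--Witten formula \eqref{eq:what_is_u_alpha}. Crucially this yields \emph{two} factors, $S^{\tw,\alpha}(u_\alpha,z)$ (entering $S_\block$) and $S^{\tw,\alpha}(u_\alpha,\chi_{\alpha\beta}/k)$ (entering the explicit expression \eqref{eq:R_explicit} for $R$). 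Thus a key piece of $R$ actually comes \emph{from} the head vertex, not from the bulk, and your central/bulk dichotomy does not map cleanly onto the split $S = R\,S_\block$. It is precisely this mechanism — not a hand-waved parameter shift — that the proposal needs and does not supply.

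Two smaller remarks. First, it is not true that the ends merely insert an $H^\bullet(E^\alpha)$-valued class: the end contribution $\epsilon_\alpha$ is a genuine element of $\cH_+$ containing $\psi$-class dependence, and the Dijkgraaf--Witten formula is what lets one trade this for a point $u_\alpha\in H^\bullet_T(E^\alpha)$. Second, your route to $R(\tau,-z)^*R(\tau,z)=\Id$ via a $V$-factorization and Birkhoff uniqueness would work, but is unnecessary: once the explicit formula for $R$ is in hand it is manifestly regular at $z=0$, and symplecticity of $R$ follows automatically because both $S$ and $S_\block$ are symplectomorphisms.
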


\begin{remark}
  Comparing the definition of $u_\alpha(\tau)$ given in the proof below with~\cite{Giv_semisimple,Giv_quantization}, one sees that the $u_\alpha(\tau)$ can be regarded as `block-canonical coordinates' of $\tau$.
\end{remark}

\begin{remark}
  Since both multiplication by $S(\tau,z)$ and multiplication by $S_{\block}(\tau,z)$ define linear symplectomorphisms $\cH \to \cH$, so does multiplication by $R(\tau,z)$.  That is, $R(\tau,-z)^* R(\tau,z) = \Id$.
\end{remark}

\begin{proof}[Proof of Proposition~\ref{pro:R}]
  The $z$-dependence in ${S(\tau,z)^\mu}_\nu$ arises only from the input $\frac{\phi_\nu}{z-\psi_{n+2}}$ in \eqref{eq:S} at the last marked point.  Let $E^\alpha$ denote the fixed-point component on which $\phi_\nu$ is supported.  By fixed-point localization, we see that ${S(\tau,z)^\mu}_\nu$ is a sum of contributions from fixed-point components $E^\Gamma_{0,n+2,d}$ where the graphs $\Gamma$ can be described as follows.  A typical $\Gamma$ has a distinguished vertex, called the \emph{head vertex}, that carries the $(n+2)$nd marked point with insertion $\frac{\phi_\nu}{z-\psi_{n+2}}$.  The head vertex is a stable map to $E^\alpha$; it is incident to $m$ trees (the \emph{ends}) that do not carry the first marked point, and also to one distinguished tree (the \emph{tail}) that carries the first marked point.  Thus ${S(\tau,z)^\mu}_\nu$ has the form:
  \begin{equation}
    \label{eq:S_localized}
    \begin{aligned}
       {\delta^\mu}_{\nu} + \sum_{\beta: \beta \to \alpha} \sum_{k=1}^\infty  
        \frac{\Big(T^{k}_{\beta,\alpha}(\phi^\mu),\phi_\nu\Big)}{\frac{\chi_{\alpha \beta}}{k} + z} + \qquad \qquad \qquad \qquad \qquad \qquad \qquad \qquad \qquad \qquad \qquad \qquad  \\
        \qquad \qquad \sum_d \sum_{m=0}^\infty \frac{Q^d}{m!}
        \correlator{ \phi^\mu + \sum_{\beta: \beta \to \alpha} \sum_{k=1}^\infty \frac{T^{k}_{\beta,\alpha}(\phi^\mu)}{\frac{\chi_{\alpha \beta}}{k} - \psi_1}, \epsilon_\alpha(\psi_2),\ldots,\epsilon_\alpha(\psi_{m+1}),\frac{\phi_\nu}{z-\psi_{m+2}} }^{B,\tw,\alpha}_{0,m+2,d} 
    \end{aligned}
  \end{equation}
  where the correlators represent integration over the moduli space of stable maps given by the head vertex, $\epsilon_\alpha$ represents the contribution of all possible ends, the linear map $T^{k}_{\beta,\alpha}\colon H_T^\bullet(E) \to H_T^\bullet(E^\alpha;\Nov)$ records the contribution of all possible tails that approach $E^\alpha$ along an edge from the $T$-fixed component $E^\beta \subset E$ with multiplicity $k$, and $\chi_{\alpha \beta}$ is the cohomology class on $E^\alpha \cong B$ defined in Section~\ref{sec:more_toric_bundles}.  The sum is over degrees $d \in H_2(B;\ZZ)$, and $Q^d$ represents, in the Novikov ring of $E$, the degree of these curves in $E^\alpha \cong B$.  Here $\epsilon_\alpha = \tau$ modulo Novikov variables, as we include in our definition of $\epsilon_\alpha$ the degenerate case where the end consists just of a single marked point attached to the head vertex.  Except ${\delta^\mu}_{\nu}$, the terms on the first line of  \eqref{eq:S_localized}  arise from those exceptional graphs $\Gamma$ where the head vertex is unstable as a map to $E^\alpha \cong B$, that is, where the head vertex is a type-3 vertex in the sense of \S\ref{sec:define_Gamma}.

In fact, \eqref{eq:S_localized} can be written as
\begin{equation}
  \label{eq:S_localized_processed}
     {\delta^\mu}_{\nu} + \sum_{\beta: \beta \to \alpha} \sum_{k=1}^\infty
    \frac{1}{z + \frac{\chi_{\alpha \beta}}{k}}
    \Bigg(
      S^{\alpha,\tw}\big(u_\alpha(\tau),{\textstyle \frac{\chi_{\alpha \beta}}{k}}\big)
      T^k_{\beta,\alpha}(\phi^\mu), \,
      S^{\alpha,\tw}\big(u_\alpha(\tau),z\big) \phi_\nu
    \Bigg)^{\alpha,\tw}
\end{equation}
where $(\cdot,\cdot)^{\alpha,\tw}$ is the twisted Poincar\'e pairing on $E^\alpha$.  This is a consequence of a general result about the structure of genus-zero Gromov--Witten invariants, applied to the Gromov--Witten theory\footnote{By the Gromov--Witten theory of $E^\alpha$ here we mean the Gromov--Witten theory of $B$ twisted by the normal bundle $N^\alpha$ to $E^\alpha \cong B$ and the equivariant inverse Euler class.} of $E^\alpha$.  Recall that the tangent space to the Lagrangian cone $\cL^\tw_{E^\alpha}$ at the point $\bt \in \H_+$ is the graph of the differential of the quadratic form on $\cH_+$ given by
\[
\epsilon \longmapsto \Res_{z=0} \Res_{w=0} \VV(\bt,{-w},{-z})\big(\epsilon(z),\epsilon(w)\big) \, dz \, dw
\]
where
\[
\VV(\bt,w,z) \big(\epsilon_1,\epsilon_2 \big) =
\frac{\big(\epsilon_1(z),\epsilon_2(w)\big)^{\alpha,\tw}}{z+w} +
\sum_d \sum_{n=0}^\infty
\frac{Q^d}{n!}
\correlator{ \frac{\epsilon_1(z)}{z-\psi_1},\bt(\psi_2),\bt(\psi_3),\ldots,\bt(\psi_{n+1}),
    \frac{\epsilon_2(w)}{w-\psi_{n+2}} }^{B,\alpha, \tw}_{0,n+2,d} 
\]
But this tangent space has the form $S^{\alpha,\tw}(u_\alpha,z)^{-1} \cH_+$ for some point $u_\alpha \in H^\bullet_T(E^\alpha)$ that is determined by $\bt$.  In fact, 
\begin{equation}
  \label{eq:what_is_u_alpha}
  u_\alpha = \sum_{\epsilon} \sum_d \sum_{n=0}^\infty \frac{Q^d}{n!}
  \correlator{1,\phi^\epsilon,\bt,\ldots,\bt}^{\alpha,\tw}_{0,n+2,d} \phi_\epsilon
\end{equation}
where the sum is over $\epsilon$ such that $\phi_\epsilon$ is supported on $E^\alpha$; this is the Dijkgraaf--Witten formula~\cite[Equation (2.2)]{Dijkgraaf--Witten} (in the context of Gromov-Witten theory, see also the proof of \cite[Theorem 1]{Giv_Frob} and \cite[Section 4.4]{Getzler:jet}).  As a result:
\[
\VV(\bt,w,z) \big(\epsilon_1,\epsilon_2 \big) =\VV(u_\alpha,w,z) \big(\epsilon_1,\epsilon_2 \big) =
\frac{1}{z+w} \Bigg( S^{\alpha,\tw}(u_\alpha,z) \epsilon_1(z),\,
S^{\alpha,\tw}(u_\alpha,w) \epsilon_1(w) \Bigg)^{\alpha,\tw}
\]
where we used \eqref{eq:JSV}.  Applying this to \eqref{eq:S_localized} yields \eqref{eq:S_localized_processed}, where $u_\alpha(\tau)$ is given by \eqref{eq:what_is_u_alpha} with $\bt$ replaced by the contribution $\epsilon_\alpha$ from all possible ends.

Setting
\begin{equation}
  \label{eq:R_explicit}
  R(\tau,z) = \Id + \sum_{\alpha \in F} \sum_{\beta: \beta \to \alpha} \sum_{k=1}^\infty  
  \frac{1}{z + \frac{\chi_{\alpha \beta}}{k}}
  \Big(T^k_{\beta,\alpha}\Big)^* 
  S^{\alpha,\tw}\big(u_\alpha(\tau),{\textstyle \frac{\chi_{\alpha \beta}}{k}}\big)^*
\end{equation}
yields the result.  This expression has no pole at $z=0$.
\end{proof}

\begin{remark}
  The end contribution $\epsilon_\alpha$ occurring in the proof of Proposition~\ref{pro:R} can be identified in terms of fixed-point localization for the $J$-function:
  \[
  J(\tau,{-z})\big|_{E^\alpha} = 
  {-1z} + \epsilon_\alpha(z) + \sum_d \sum_{m=0}^\infty \sum_\nu \frac{Q^d}{m!}
        \correlator{\epsilon_\alpha(\psi_1),\ldots,\epsilon_\alpha(\psi_m),\frac{\phi^\nu}{z-\psi_{m+1}} }^{B,\alpha, \tw}_{0,m+1,d} \phi_\nu \big|_{E^\alpha}
  \]
  where we processed the virtual localization formulas exactly as in the proof of Proposition~\ref{pro:R}.  Since $\psi_{n+1}$ is nilpotent, the correlator terms have poles (and no regular part) at $z=0$.  At the same time, the summand $\epsilon_\alpha(z)$ has no pole at $z=0$, as it is equal to $\tau|_{E^\alpha}$ plus a sum of terms of the form $\frac{c}{-z + \chi_{\alpha \beta}/k}$ where $c$ is independent of $z$.  Thus 
  \begin{equation}
    \label{eq:end_from_J}
    \epsilon_\alpha(z) = z + \Big[ J(\tau,{-z})\big|_{E^\alpha} \Big]_+ = z - \Big[ z S(\tau,{-z})^* 1\big|_{E^\alpha} \Big]_+
  \end{equation}
  where $[ \cdot ]_+$ denotes taking the power series part of the Laurent expansion at $z=0$.  For the last equality here we used \eqref{eq:JSV}.
\end{remark}

\subsection{Virtual Localization for the Ancestor Potential}  We will use virtual localization to express generating functions for Gromov--Witten invariants of $E$ in terms of generating functions for twisted Gromov--Witten invariants of $E^T$.  The $T$-equivariant \emph{total ancestor potential} of $E$ is
\begin{equation}
  \label{eq:ancestor_potential}
  \cA(\tau;\bt) = \exp\left(\sum_{g=0}^\infty \hbar^{g-1} \bar{\cF}_g(\tau;\bt) \right)
\end{equation}
where $\bar{\cF}_g$ is the $T$-equivariant genus-$g$ ancestor potential\footnote{Integrals over non-existing moduli spaces are defined to be $0$.}:
\[
\bar{\cF}_g(\tau;\bt) = \sum_{d \in H_2(E)} \sum_{m=0}^\infty \sum_{n=0}^\infty 
\frac{Q^d}{m!n!}
\int_{[E_{g,m+n,d}]^\vir} 
\prod_{i=1}^m \left( \sum_{k=0}^\infty \ev_i^* (t_k) \bpsi_i^k \right)
\prod_{i=m+1}^{m+n} \ev_i^* \tau
\]
Here $\tau \in H_T^\bullet(E)$, $\bt = t_0 + t_1 z + t_2 z^2 + \cdots \in H_T^\bullet(E)[z]$, $Q^d$ is the representative of $d \in H_2(E;\ZZ)$ in the Novikov ring, $\ev_i \colon E_{g,n,d} \to E$ is the evaluation map at the $i$th marked point, and the `$i$th ancestor class' $\bpsi_i \in H_T^2(E_{g,m+n,d};\QQ)$ is the pullback of the cotangent line class $\psi_i \in H^2(\Mbar_{g,m};\QQ)$ along the contraction morphism $\ct \colon E_{g,m+n,d} \to \Mbar_{g,m}$ that forgets the map and the last $n$ marked points and then stabilizes the resulting $m$-pointed curve.  We will express the total ancestor potential $\cA$, via virtual localization, in terms of the total ancestor potentials $\cA_B^{\alpha,\tw}$ of the base manifold $B$ twisted~\cite{Coates-Givental} by the $T$-equivariant inverse Euler class of the normal bundle $N^\alpha$ to the $T$-fixed locus $E^\alpha \cong B$ in $E$.  These are defined exactly as above, but replacing $E$ by $B$ and replacing the virtual class $[E_{g,m+n,d}]^\vir$ by $[B_{g,m+n,d}]^\vir \cap e_T^{-1} \big(N^{\alpha}_{g,m+n,d}\big)$ for an appropriate twisting class $N^{\alpha}_{g,m+n,d} \in K^0_T(B_{g,m+n,d})$.  

\begin{theorem}  
  \label{thm:ancestor}
    \[
    \cA(\tau;\bt) = \widehat{R(\tau)} \prod_{\alpha \in F} \cA^{\alpha,\tw}_B(u_\alpha(\tau);\bt_\alpha)
    \]
    where $\bt = \oplus_{\alpha \in F} \bt_\alpha$ with $\bt_\alpha \in H^*_T(E^\alpha)[z]$ and $\widehat{R(\tau)}$ is the quantization of the linear symplectomorphism $f \mapsto R(\tau,z) f$, and the map $\tau \mapsto \oplus_{\alpha \in F} u_{\alpha}(\tau)$ is defined by~\eqref{eq:what_is_u_alpha}.
\end{theorem}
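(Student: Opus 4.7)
The plan is to apply virtual localization to the integrals defining $\cA(\tau;\bt)$ and reorganize the resulting sum over decorated graphs $\Gamma$ into the claimed factored form, mimicking Givental's quantization arguments for semisimple theories but adapted to toric bundles where the ``vertex'' moduli are themselves moduli spaces of stable maps to $B$. The starting point is the Graber--Pandharipande formula: each correlator $\bar\cF_g(\tau;\bt)$ is a sum over combinatorial types $\Gamma$, and each summand factors according to~\eqref{eq:T_fixed_component} and the decomposition $\Euler(N_{M^T}) = C_{\text{\rm smoothing}} C_{\text{\rm vertices}} C_{\text{\rm edges}}$. A key observation is that the ancestor class $\bar\psi_i$, being pulled back along the contraction $\ct\colon E_{g,m+n,d}\to\Mbar_{g,m}$, restricts to the corresponding ancestor class on the stable component $B_{g_v,n_v,d_v}$ carrying the $i$th marked point and vanishes on unstable components (trees that are contracted by $\ct$). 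This is the structural fact that makes ancestor localization much cleaner than descendant localization.

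First, I would separate the $\tau$-insertions at the $n$ unmarked evaluation points and the $\bt$-insertions at the $m$ marked points into their locations in $\Gamma$: $\tau$-insertions lie either on stable-piece vertices (where they become ordinary $\tau$-insertions in an integral over $B_{g_v,n_v,d_v}$) or on ``ends,'' i.e.~trees of type-1/2/3/4 vertices without marked-point insertions; $\bt$-insertions, because $\bar\psi$ vanishes on contracted components, must lie on stable-piece vertices. The contributions of ends attached to a stable head vertex sitting over $E^\alpha$ may be summed using the genus-zero structure, exactly as in \eqref{eq:S_localized}--\eqref{eq:S_localized_processed}: the Dijkgraaf--Witten identity~\eqref{eq:what_is_u_alpha} collapses the infinite sum of end configurations into a single insertion $\epsilon_\alpha(\bar\psi)$ at each marked point of the head, shifting the bulk parameter from $\tau$ to precisely $u_\alpha(\tau)$.

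Second, I would identify the remaining factors, namely the tails (trees bearing the ``outer'' legs whose far end is type 3, 4, or 2), the edge contributions $C_{\text{\rm edges}}$, and the smoothing factors at type-1 and type-2 flags, with the action of the quantized symplectomorphism $\hat{R(\tau)}$. The Feynman diagram formula for $\hat{R(\tau)}$ applied to a product of ancestor potentials expands precisely as a sum over graphs whose vertices carry $\bt$-insertions on $B$ and whose edges carry two-point functions obtained from the bivector part of $R(\tau,z)$. The form~\eqref{eq:R_explicit} of $R(\tau,z)$, combined with the smoothing factor $\chi_{\alpha\beta}/k_e - \bar\psi_e$ at a flag, produces the propagator $\frac{1}{z+\chi_{\alpha\beta}/k}$ with the $\bar\psi$ dependence absorbed into the ancestor insertions at either end; the tail factors $T^k_{\beta,\alpha}$ match, via the fundamental solution $S^{\alpha,\tw}$ appearing in~\eqref{eq:R_explicit}, the ``one-sided'' terms needed for legs ending at type-3 or type-4 vertices; and type-2 vertices contribute the analogous two-sided pairing. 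The consistency of these matches is essentially the content of Proposition~\ref{pro:R} promoted to all genera via the $\hbar$-expansion of the quantization.

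The main obstacle is a bookkeeping one: checking that the combinatorial weights, automorphism factors, and smoothing denominators match on the nose between the Feynman graph expansion of $\hat{R(\tau)}\prod_\alpha \cA^{\alpha,\tw}_B(u_\alpha(\tau);\bt_\alpha)$ and the graph sum produced by virtual localization on $E_{g,m+n,d}$. The standard argument (e.g.~as used for toric targets and for semisimple theories in~\cite{Giv_quantization}) is that the edge/smoothing factors at a flag $(v,e)$, after being paired with the ancestor insertion they create on the adjacent stable vertex, agree with the expansion of the propagator $V(u_\alpha,w,z) = \tfrac{1}{w+z}(S^{\alpha,\tw}(u_\alpha,w)^\ast S^{\alpha,\tw}(u_\alpha,z))$ coming from the second identity in \eqref{eq:JSV}; the automorphism factor in~\eqref{eq:T_fixed_component} coincides with the symmetry factor of the corresponding Feynman graph; and the tails combined with the $S^{\alpha,\tw}$ factors in~\eqref{eq:R_explicit} exactly produce the ``one-leg'' insertions required by the quantization rule. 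Once these matchings are verified, summing over $\Gamma$ yields the claimed identity.
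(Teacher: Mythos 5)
Your high-level strategy (virtual localization, decomposition of $\Gamma$ into macroscopic pieces, matching against the Wick/Feynman expansion of $\widehat{R(\tau)}$ applied to $\prod_\alpha \cA^{\alpha,\tw}_B$) is the right outline and coincides with the paper's. But there is a genuine gap in how you treat the $\psi$- versus $\bpsi$-classes, and this is precisely where the heavy lifting in the actual proof happens.

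The smoothing factor at a type-1 flag is $\chi_{\alpha\beta}/k_e - \psi_e$ with the \emph{descendant} cotangent class $\psi_e$ at the attaching node, not the ancestor class $\bpsi_e$; likewise the end contributions $\epsilon_\alpha$ get inserted with descendant $\psi$-classes because ends are entire trees attached via nodes, not marked points of $C'$. So after localization each stable vertex integral is of the form \eqref{eq:vertex_alone}: insertions $T_i(\bpsi_i)/(\chi_i - \psi_i)$ mixing an ancestor class in the numerator with a descendant class in the denominator, plus $\epsilon_\alpha(\psi)$-insertions with pure descendant classes. This is neither an ancestor correlator of $B$ nor a descendant correlator; it is a ``mixed'' correlator, and converting it to the form required by Wick's formula \eqref{eq:Wick_2} is exactly the content of equation \eqref{eq:psi_to_bpsi} (which uses the Getzler/Kontsevich--Manin gluing-divisor relation and produces the \emph{dressing factors} $S^{\alpha,\tw}_*(u_\alpha(\tau),\chi_i)$) together with Proposition~\ref{pro:mixed_from_ancestor}, which packages the end contributions and the shift to base point $u_\alpha(\tau)$. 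Your proposal writes the smoothing factor as $\chi_{\alpha\beta}/k_e - \bpsi_e$ and asserts the ends collapse into ``a single insertion $\epsilon_\alpha(\bpsi)$'': both statements implicitly assume the descendant-to-ancestor conversion has already happened, which is what needs to be proved. Without this step the propagator you obtain from the smoothing/edge factors will not match the one dictated by $R(\tau,w)^* R(\tau,z)$; you would be comparing objects written in different bases of $\cH$.

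In short, the ``structural fact'' you cite (that $\bpsi_i$ at a marked point restricts cleanly and vanishes on contracted components) is correct and is used, but it only governs the $m$ marked-point insertions; it does not control the descendant classes at attaching nodes and on ends. You need an explicit $\psi \to \bpsi$ conversion at the level of vertex integrals, the associated dressing factors, and a statement like Proposition~\ref{pro:mixed_from_ancestor} relating mixed correlators to the ancestor potential at the Dijkgraaf--Witten point $u_\alpha(\tau)$, before the graph-by-graph matching with Wick's formula can close.
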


The rest of this Section contains a proof of Theorem~\ref{thm:ancestor}.  According to~\cite[Proposition~7.3]{Giv_quantization}, the action of $\widehat{R(\tau)}$ is given by a Wick-type formula
\begin{equation}
  \label{eq:Wick_2}
  \widehat{R(\tau)} \, \left(\prod_{\alpha \in F} \cA^{\alpha, \tw}_B\big(u_\alpha(\tau);\bt_\alpha\big) \right) = \left( \exp(\Delta) \prod_{\alpha \in F} \cA_B^{\alpha, \tw} \big(u_\alpha(\tau), \bt_\alpha\big)\middle) \right|_{\bt \mapsto R(\tau,z)^{-1} \bt +  z( \Id - R(\tau,z)^{-1}) 1} 
\end{equation}
where the propagator $\Delta$, which depends on $\tau$, is defined by
\begin{align*}
  \Delta = \frac{\hbar}{2} \sum_{i,j} \sum_{\lambda,\mu} \Delta^{ij}_{\lambda,\mu} 
  \frac{\partial}{\partial t^\lambda_i}
  \frac{\partial}{\partial t^\mu_j}
  && \text{and} &&
                   \sum_{i,j} \sum_{\lambda} \Delta^{ij}_{\lambda,\mu} (-1)^{i+j} w^i z^j \phi^\lambda  = 
                   \left(\frac{R(\tau,w)^* R(\tau,z) - \Id}{w+z}\right)\phi_\mu.
\end{align*}
We will compute the $T$-equivariant total ancestor potential $\cA(\tau;\bt)$ using virtual localization, obtaining a Wick-type formula which matches precisely with \eqref{eq:Wick_2}.

We begin by factoring the fixed point loci $E^T_{g,m+n,d}$ into ``macroscopic'' pieces called stable vertices, stable edges, tails, and ends, which are defined somewhat informally as follows.  Given a $T$-fixed stable map $C \to E$ with $m+n$ marked points, forgetting the last $n$ marked points yields a stable curve $C'$ with $m$ marked points, and a stabilization morphism $C\to C'$ given by contracting the unstable components. We label points of $C$ according to their fate under the stabilization morphism: a tree of rational components of $C$ which contracts to a node, a marked point, or a regular point of $C'$ is called respectively a {\em stable edge}, a {\em tail}, or an {\em end}, while each maximal connected component of $C$ which remains intact in $C'$ is called a {\em stable vertex}. Under virtual localization, the contribution of $E^T_{g,m+n,d}$ into $\cA_E$ can be assembled from these pieces.  We will see that the contributions from stable edges, tails, and ends together give rise to the operator $R$, while the contribution of stable vertices gives $\prod_{\alpha \in F} \cA^{\alpha,\tw}_B$.

More formally, virtual localization expresses $\cA(\tau;\bt)$ as a sum over $T$-fixed strata $E_{g,n,d}^\Gamma$ in $E_{g,m+n,d}$ which are indexed by decorated graphs $\Gamma$ as in \S\ref{sec:define_Gamma}.  Consider a prestable curve $C$ with combinatorial structure $\Gamma$ and the curve $C'$ obtained from $C$ by forgetting the last $n$ marked points and contracting unstable components.  The curve $C$ can be partitioned into pieces according to the fate of points of $C$ under this process.  Those components of $C$ that survive as components of $C'$ are called \emph{stable vertices} of $C$.  Maximal connected subsets which contract to nodes of $C'$ are called \emph{stable edges} of $C$.  Maximal connected subsets which contract to a marked point of $C'$ are called \emph{tails}.  Maximal connected subsets which contract to smooth unmarked points of $C'$ are called \emph{ends}.  We denote by $\ct(\Gamma)$ the combinatorial structure of $C'$, that is, the graph $\gamma$ with vertices and edges given respectively by the stable vertices and stable edges, and with each vertex decorated by its genus and number of tails.  We arrange the sum over $\Gamma$ from virtual localization according to the stable graphs $\ct(\Gamma)$: \[ \cA(\tau;\bt) = \sum_\Gamma c_\Gamma = \sum_\gamma \sum_{\Gamma : \ct(\Gamma) = \gamma} c_{\Gamma}.  \]

We begin by analysing certain integrals over stable vertices which occur in the virtual localization formulas.  These take the form
\begin{equation}
  \label{eq:vertex_alone}
  \correlator{
    \frac{T_1(\bpsi_1)}{\chi_1-\psi_1},
    \ldots,
    \frac{T_m(\bpsi_m)}{\chi_m-\psi_m},
    \epsilon_\alpha(\psi_{m+1}),
    \ldots,
    \epsilon_\alpha(\psi_{m+n})
  }^{\alpha,\tw}_{g,m+n,d}
\end{equation}
where $T_1,\ldots,T_m$ arise from tails and/or stable edges,  $\epsilon_\alpha$ arises from ends, and each $\chi_i$ is equal to  $\frac{\chi_{\alpha \beta}}{k}$ for some $\beta \to \alpha$ and some $k \in \NN$.  Note the presence of descendant classes $\psi_i$.  Our first task is to express these vertex integrals in terms of \emph{ancestor} potentials, where no descendant classes occur.  Consider the sum
\begin{equation}
  \label{eq:vertex_sum}
  \sum_{d,n} \frac{Q^d}{n!} 
  \correlator{
    \frac{T_1(\bpsi_1)}{\chi_1-\psi_1},
    \ldots,
    \frac{T_m(\bpsi_m)}{\chi_m-\psi_m},
    \epsilon_\alpha(\psi_{m+1}),
    \ldots,
    \epsilon_\alpha(\psi_{m+n})
  }^{\alpha,\tw}_{g,m+n,d}
\end{equation}
and note the identity 
\begin{equation}
  \label{eq:psi_to_bpsi_1}
    \frac{T_i(y)}{\chi_i - x} = 
    \frac{T_i(y)}{\chi_i - y} + 
    \frac{(x - y) T_i (y)}{(\chi_i - x)(\chi_i - y)}
\end{equation}
in $H_T^\bullet(E)[\![x,y]\!]$. We can replace the insertion $\frac{T_i(\bpsi_i)}{\chi_i - \psi_i}$ in \eqref{eq:vertex_sum} first by the right-hand side of \eqref{eq:psi_to_bpsi_1} with $x=\psi_i$ and $y=\bpsi_i$, and then by
\begin{equation}
  \label{eq:psi_to_bpsi_2}
  \frac{T_i(\bpsi_i)}{\chi_i - \bpsi_i} + 
  \sum_{n,d} \frac{Q^d}{n!}  \sum_{\mu,\nu} \phi_\mu
  \correlator{
    \frac{\phi^\mu}{\chi_i-\psi_1},
    \epsilon_\alpha(\psi_2),\ldots,\epsilon_\alpha(\psi_{n+1}),\phi_\nu}^{\alpha,\tw}_{0,n+2,d}
  \left( \phi^\nu,\frac{T_i(\bpsi_i)}{\chi_i-\bpsi_i} \right)^{\alpha,\tw} 
\end{equation}
Here we used the fact, first exploited by Getzler~\cite{Getzler:jet} and Kontsevich--Manin~\cite{Kontsevich--Manin}, that $\psi_i - \bpsi_i$ is Poincar\'e-dual to the virtual divisor which is total range of the gluing map
\[
\bigsqcup_{\substack{n_1+n_2=n\\d_1+d_2=d}}
E_{0,n_1+2,d_1} \times_E E_{g,m-1+n_2+1,d_2}
\longrightarrow E_{g,m+n,d}
\]
that attaches a genus-zero stable map carrying the $i$th marked point and $n_1$ marked points with insertions in $\{m+1,\ldots,m+n\}$ to a genus-$g$ stable map carrying the marked points $1,\ldots,m$ omitting $i$, and $n_2$ marked points with insertions in $\{m+1,\ldots,m+n\}$.  The insertion \eqref{eq:psi_to_bpsi_2} simplifies to
\begin{equation}
  \label{eq:psi_to_bpsi_3}  
  \frac{S^{\alpha,\tw}(u_\alpha(\tau),\chi_i) T(\bpsi_i)}{\chi_i-\bpsi_i} 
\end{equation}
using~\cite[equation~2]{Giv_Frob} and the fact that the contribution $\epsilon_\alpha$ from ends here coincides with the contribution $\epsilon_\alpha$ from ends in genus zero which occurred in equation~\eqref{eq:what_is_u_alpha}.  In what follows we will see that whenever contributions from tails and stable edges are expressed in terms of $\bpsi_i$-classes, the transformations $S^{\alpha,\tw}(u_\alpha(\tau),\chi_i)$ will occur.  We call these transformations the \emph{dressing factors}.

Consider the following generating function which we call the \emph{mixed potential}:
\[  
\cA(\bepsilon;\bt) = \exp \left(\sum_{g=0}^\infty \sum_{m,n=0}^\infty \sum_d \frac{\hbar^{g-1} Q^d}{m! n!}
  \correlator{\bt(\bpsi_1),\ldots,\bt(\bpsi_m);\bepsilon(\psi_{m+1}),\ldots,\bepsilon(\psi_{m+n})}_{g,m+n,d}\right)
\]
Note that specializing $\bepsilon \in \cH_+$ to $\tau \in H^\bullet_T(E) \subset \cH_+$ recovers the ancestor potential $\cA(\tau;\bt)$.  The notion of mixed potential makes sense for a general Gromov--Witten-type theory, including the Gromov--Witten theory of arbitrary target spaces (equivariant or not), twisted Gromov--Witten theory, etc., and Proposition~\ref{pro:mixed_from_ancestor} below expresses the mixed potential of such a theory in terms of the ancestor potential for that theory.  The argument just given showed that the sum \eqref{eq:vertex_sum} can be expressed, by including appropriate dressing factors, in terms of the mixed potential associated to the twisted Gromov--Witten theory of the $T$-fixed locus $E^\alpha$.  

\begin{proposition}
  \label{pro:mixed_from_ancestor}
  \[
  \cA(\bepsilon;\bt) = 
  \cA\Big(u(\bepsilon);\bt+\big[S(u(\bepsilon),z)(\bepsilon(z)-u(\bepsilon))\big]_+\Big)
  \]
  where $u(\bepsilon)\in H$ is characterized by
  \[
  \big[S(u(\bepsilon),z) \big(\bepsilon(z) - u(\bepsilon)\big)\big]_+ \in z\cH_+.
  \]
\end{proposition}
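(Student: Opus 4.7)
I plan to prove the identity by reorganizing the moduli-space sum defining $\cA(\bepsilon;\bt)$ according to the contraction morphism $\ct\colon E_{g,m+n,d}\to \overline{\M}_{g,m}$. A stable map in $E_{g,m+n,d}$ decomposes into a ``main curve'' (contributing to $\overline{\M}_{g,m}$ under $\ct$) together with genus-zero bubble trees into which the extra marked points $m+1,\ldots,m+n$ collapse. Each such bubble tree attaches to the main curve either at one of the $m$ ancestor marked points or at an otherwise smooth point. The plan is to sum over bubble-tree configurations before summing over main curves, and identify the result as an ancestor potential with modified arguments.

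Concretely, I would isolate the descendant insertions $\bepsilon(\psi_j)$ at points $j>m$ and use the comparison between $\psi_j$ and its would-be ancestor counterpart, in the same spirit as \eqref{eq:psi_to_bpsi}: since these marked points are forgotten by $\ct$, the class $\psi_j$ comes entirely from boundary contributions, each corresponding to a genus-zero bubble tree. The sum of such strata, weighted by the insertions $\bepsilon(\psi)$ they carry, can be computed in genus zero via the tangent-space description of the Lagrangian cone $\cL$. The net effect is that a bubble tree attached at a smooth point of the main curve contributes exactly the primary insertion $u(\bepsilon)$, by the Dijkgraaf--Witten-type formula \eqref{eq:what_is_u_alpha}, while a bubble tree attached at an ancestor marked point $i\le m$ contributes the polynomial-in-$\bpsi_i$ part of $S(u(\bepsilon),\bpsi_i)\bigl(\bepsilon(\bpsi_i) - u(\bepsilon)\bigr)$ as a shift to the insertion $\bt(\bpsi_i)$.

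After this reorganization, the remaining sum over main curves reproduces precisely $\cA\bigl(u(\bepsilon);\bt+[S(u(\bepsilon),z)(\bepsilon(z) - u(\bepsilon))]_+\bigr)$. The defining condition $[S(u,z)(\bepsilon(z) - u)]_+\in z\cH_+$ on $u(\bepsilon)$, i.e., the vanishing of the constant-in-$z$ term, is forced by the requirement that the primary insertion $u$ placed at each smooth attachment point already absorbs the $z=0$ part of $S(u,z)(\bepsilon(z) - u)$, so that no double-counting occurs between bubble trees attached at smooth points and the primary input of the ancestor potential. The main obstacle is the genus-zero combinatorial computation that identifies the sum over bubble trees with the operator $S(u,z)$ applied to $\bepsilon(z) - u$; this is a standard consequence of the overruled-cone structure of $\cL$ and mirrors directly the manipulation used to pass from \eqref{eq:S_localized} to \eqref{eq:S_localized_processed}.
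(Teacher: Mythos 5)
Your approach is in the same spirit as the paper's — both hinge on the Getzler/Kontsevich--Manin comparison of descendant $\psi$-classes with ancestor classes, and on re-summation via the overruled-cone structure — but your sketch runs into a concrete problem that the paper's proof is specifically engineered to avoid.

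You propose to compare $\psi_j$ (for $j>m$) with ``its would-be ancestor counterpart'' for the contraction $\ct\colon E_{g,m+n,d}\to\Mbar_{g,m}$. But $\ct$ forgets the marked points with $j>m$, so relative to this contraction $\psi_j$ simply has no ancestor counterpart, and the comparison in the style of \eqref{eq:psi_to_bpsi} does not apply. The paper sidesteps this by first splitting $\bepsilon = u(\bepsilon) + \by$, so that out of the extra marked points, $l$ carry the \emph{primary} insertion $u(\bepsilon)$ (no $\psi$-dependence) and $k$ carry $\by(\psi_j)$. It then invokes the \emph{intermediate} contraction $E_{g,m+k+l,d}\to\Mbar_{g,m+k}$, which forgets only the $l$ primary-insertion points; relative to this map the $k$ remaining $\psi_j$ do have ancestor counterparts, and the Getzler/KM argument turns $\by(\psi_j)$ into $\bx(\bpsi_j)$ with $\bx = [S(u(\bepsilon),z)\,\by(z)]_+$. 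This intermediate space plays no role in your sketch, and without it the mechanism for producing the operator $S$ is not actually available.

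The second missing ingredient is what then reconciles the ancestor classes at the first $m$ points. After the step above, those insertions involve the $\Mbar_{g,m}$-ancestor classes $\bbpsi_i$, whereas the multi-linear expansion of $\cA\big(u;\bt+\bx\big)$ produces $\Mbar_{g,m+k}$-ancestor classes $\bpsi_i$ uniformly. The paper bridges this with the vanishing $\big(\prod_{r=m+1}^{m+k}\bpsi_r\big)\cup(\bpsi_i-\bbpsi_i)=0$ on $\Mbar_{g,m+k}$, and this vanishing can be exploited \emph{exactly} because $\bx(z)$ is divisible by $z$ --- which is what the defining condition $[S(u(\bepsilon),z)(\bepsilon(z)-u(\bepsilon))]_+\in z\cH_+$ guarantees. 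Your heuristic that the condition prevents ``double-counting between bubble trees at smooth points and the primary input'' does not describe this mechanism and would need substantial work to be turned into an argument; in the paper the condition is not a bookkeeping normalization but the precise hypothesis under which the ancestor-class comparison lemma kills the boundary discrepancy $\bpsi_i-\bbpsi_i$.
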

  
\begin{proof}
  Set $\by(z) = \bepsilon(z) - u(\bepsilon)$, so that:
  \[
  \cA(\bepsilon;\bt) = \exp \left(\sum_{g=0}^\infty \sum_{k,l,m=0}^\infty \sum_d  \frac{\hbar^{g-1} Q^d}{k! l! m!} 
    \correlator{\bt(\bpsi_1),\ldots,\bt(\bpsi_m);\by(\psi_{m+1}),\ldots,\by(\psi_{m+k}),u(\bepsilon),\ldots,u(\bepsilon)}_{g,k+l+m,d}\right)
  \]
  Then consider the morphism $X_{g,m+k+l,d} \to \Mbar_{g,m+k}$ forgetting the map and the last $l$ marked points and then stabilizing; the Getzler/Kontsevich--Manin ancestor-to-descendant argument discussed above then gives:
  \[
  \cA(\bepsilon;\bt) = \exp \left(\sum_{g=0}^\infty \sum_{k,l,m=0}^\infty \sum_d  \frac{\hbar^{g-1} Q^d}{k! l! m!}
    \correlator{\bt(\bbpsi_1),\ldots,\bt(\bbpsi_m),\bx(\bpsi_{m+1}),\ldots,\bx(\bpsi_{m+k}); u(\bepsilon),\ldots,u(\bepsilon)}_{g,k+l+m,d}\right)
  \]
  where $\bx(z) = \big[S({u(\bepsilon)},z) \by(z)\big]_+$ and the classes $\bbpsi_i$ differ from the ancestor classes $\bpsi_i$ by being lifts of $\psi$-classes from $\Mbar_{g,m}$ rather than from $\Mbar_{g,m+k}$.  For $\psi$-classes on Deligne--Mumford spaces, we have:
  \begin{align*}
    \left(\prod_{r=m+1}^{m+k} \psi_r\right) \cup (\psi_i - \pi^* \psi_i) = 0 && \text{for $i = 1,2,\ldots,m$}
  \end{align*}
  where $\pi \colon \Mbar_{g,m+k} \to \Mbar_{g,m}$ is the map that forgets the last~$k$ marked points and then stabilizes.  This is because $\psi_i - \pi^* \psi_i$ is the divisor in $\Mbar_{g,m+k}$ given by the image of the gluing map
  \[
  \bigsqcup_{k_1+k_2=k}
  \Mbar_{g,m-1+k_2+1} \times \Mbar_{0,k_1+1} \to \Mbar_{g,m+k}
  \]
  where the second factor carries $k_1$ marked points with indices in $\{m+1,\ldots,m+k\}$, and on this divisor the product $\prod_{r=m+1}^{m+k} \psi_r$ vanishes for dimensional reasons.  Thus:
  \begin{align*}
    \left(\prod_{r=m+1}^{m+k} \bpsi_r\right) \cup (\bpsi_i - \bbpsi_i) = 0 && \text{for $i = 1,2,\ldots,m$}
  \end{align*}
  and, since $\bx(z)$ is divisible by $z$ -- this is exactly how we chose $u(\bepsilon)$ -- we may replace $\bbpsi_i$s by $\bpsi_i$s in our expression for $\cA(\bepsilon,\bt)$ above.  Polylinearity then gives the Proposition.
\end{proof}

Now we apply Proposition~\ref{pro:mixed_from_ancestor} to the twisted Gromov--Witten theory of the $T$-fixed locus $E^\alpha$, so that $S$ there is $S^{\alpha,\tw}$.  We will show that if we set $\bepsilon$ equal to the end contribution $\epsilon_\alpha$ in the vertex integrals \eqref{eq:vertex_sum}, then $u(\bepsilon)$ becomes the block-canonical coordinate $u_\alpha(\tau)$ defined in Proposition~\ref{pro:R}, and that
\begin{equation}
  \label{eq:end_contribution_matches_Wick}
  \big[S^{\alpha,\tw}(u_\alpha(\tau),z)(\epsilon_\alpha(z)-u_\alpha(\tau))\big]_+ = z \big(\Id - R(\tau,z)^{-1}\big) 1.
\end{equation}
Note that this precisely matches the contribution to Wick's Formula in equation~\eqref{eq:Wick_2}.  Since $S^{\alpha,\tw}(u_\alpha(\tau),z)$ is a power series in $z^{-1}$, we have:
\[
\Big[ S^{\alpha,\tw}(u_\alpha(\tau),z) \big[ A \big]_+ \Big]_+ = \Big[ S^{\alpha,\tw}(u_\alpha(\tau),z) \, A \Big]_+
\]
for any $A$.  From \eqref{eq:end_from_J}, we have
\[
\epsilon_\alpha(z) = z - \Big[ z S(\tau,{-z})^* 1\big|_{E^\alpha} \Big]_+
\]
and Proposition~\ref{pro:R} gives
\[
S(\tau,-z)^* 1\big|_{E^\alpha} = S^{\alpha, \tw}(u_\alpha(\tau),{-z})^* \Big( R(\tau,{-z})^* 1  \big|_{E^\alpha}\Big).
\]
We compute:
\[
\big[S^{\alpha,\tw}(u_\alpha(\tau),z)(\epsilon_\alpha(z)-u_\alpha(\tau))\big]_+ = z + u_\alpha(\tau)  - z R(\tau,{-z})^* 1  \big|_{E^\alpha} - u_\alpha(\tau)
\]
where the first two terms are $\big[S^{\alpha,\tw}(u_\alpha(\tau),z) z\big]_+$, the last term is $\big[S^{\alpha,\tw}(u_\alpha(\tau),z) u_\alpha(\tau)\big]_+$, and we used $S^{\alpha,\tw}(u_\alpha(\tau),z) S^{\alpha,\tw}(u_\alpha(\tau),{-z})^* = \Id$.  The right-hand side here lies in $z \cH_+$, and therefore $u(\epsilon_\alpha) = u_\alpha(\tau)$, as claimed; furthermore, since $R(\tau,-z)^* R(\tau,z) = \Id$ we obtain \eqref{eq:end_contribution_matches_Wick}.

We return now to the integrals \eqref{eq:vertex_alone} over stable vertices.  Here \emph{tails} -- which carry one non-forgotten marked point each -- give rise to insertions of
\[
\bt(\bpsi) + \sum_{\alpha \in F} \sum_{\beta: \beta \to \alpha} \sum_{k=1}^\infty  
\frac{1}{\frac{\chi_{\alpha \beta}}{k} - \psi}
T^k_{\beta,\alpha} \bt(\bpsi)
\] 
where the linear maps $T^k_{\beta,\alpha}$ were defined in the proof of Proposition~\ref{pro:R} and $\bt$ is the argument of the ancestor potential~\eqref{eq:ancestor_potential}.  Applying equations (\ref{eq:psi_to_bpsi_1}--\ref{eq:psi_to_bpsi_3}), we can write this in terms of $\bpsi$ only by including dressing factors:
\[
\bt(\bpsi) + \sum_{\alpha \in F} \sum_{\beta: \beta \to \alpha} \sum_{k=1}^\infty  
\frac{1}{\frac{\chi_{\alpha \beta}}{k} - \bpsi}
S^{\alpha,\tw}\big(u_\alpha(\tau),{\textstyle \frac{\chi_{\alpha \beta}}{k}}\big) 
T^k_{\beta,\alpha} \bt(\bpsi)
\] 
From \eqref{eq:R_explicit} and $R(\tau,-z)^* R(\tau,z) = \Id$, we see that this is $\big(R(\tau,z)^{-1} \bt(z) \Big)\big|_{E^\alpha}$ with $z = \bpsi$.  Again, this precisely matches the contribution to Wick's Formula in equation~\eqref{eq:Wick_2}. 

In the localization formula for the ancestor potential, the edge contributions occur in the form
\[
\sum_{\beta \to \alpha}
\sum_{\beta' \to \alpha'}
\sum_{k=1}^\infty
\sum_{k'=1}^\infty
\frac{E^{k,k'}_{\beta\to\alpha,\beta'\to\alpha'}}
{
\Big(\frac{\chi_{\alpha\beta}}{k} - \psi\Big)
\Big(\frac{\chi_{\alpha'\beta'}}{k'} - \psi'\Big)
},
\]
each functioning as two vertex insertions (which may be to the same vertex or different vertices).  Here $\psi$ and $\psi'$ are the $\psi$-classes on the vertex moduli spaces at the corresponding marked points, and the class $E^{k,k'}_{\beta\to\alpha,\beta'\to\alpha'} \in H^\bullet_T(B) \otimes H^\bullet_T(B)$ is pulled back to these moduli spaces by the product $\ev \times \ev'$ of the corresponding evaluation maps.  This expression participates, in the same role, in localization formulas for the genus-zero quantity $V(\tau,w,z)$.
As before, we can replace $\psi$-classes by $\bpsi$-classes provided that we introduce dressing factors:
\[
\sum_{\beta \to \alpha}
\sum_{\beta' \to \alpha'}
\sum_{k=1}^\infty
\sum_{k'=1}^\infty
\frac{
  S^{\alpha,\tw}\big(u_\alpha(\tau),{\textstyle \frac{\chi_{\alpha \beta}}{k}}\big) \otimes 
  S^{\alpha',\tw}\big(u_{\alpha'}(\tau),{\textstyle \frac{\chi_{\alpha' \beta'}}{k'}}\big) 
  \Big(E^{k,k'}_{\beta\to\alpha,\beta'\to\alpha'}\Big)}
{
\Big(\frac{\chi_{\alpha\beta}}{k} - \bpsi\Big)
\Big(\frac{\chi_{\alpha'\beta'}}{k'} - \bpsi'\Big)
}
=: E_{\alpha,\alpha'}(\bpsi,\bpsi').
\]
Computing $V(\tau,w,z)$ by virtual localization, as in the proof of Proposition~\ref{pro:R}, and applying the identities 
\[
V^{\alpha,\tw}\Big(u_\alpha,z,{\textstyle\frac{\chi_{\alpha\beta}}{k}}\Big) = \frac{S^{\alpha,\tw}\Big(u_\alpha,z\Big)^* S^{\alpha,\tw}\Big(u_\alpha,\frac{\chi_{\alpha\beta}}{k}\Big)}{z+\frac{\chi_{\alpha\beta}}{k}}
\]
for each $\alpha \in F$, $\beta \to \alpha$, and $k \in \{1,2,\ldots\}$ yields
\[
V(\tau,z,z') = S_\block(\tau,z)^* \circ \left( \frac{\Id}{z+z'} + \bigoplus_{\alpha,\alpha' \in F} E_{\alpha,\alpha'}(z,z')
\right) \circ S_\block(\tau,z')
\]
where we regard the bivector $E_{\alpha,\alpha'}$ as an operator via the twisted Poincar\'e pairing.  But
\[
V(\tau,z,z') = \frac{S(\tau,z)^* S(\tau,z')}{z+z'} = \frac{S_\block(\tau,z)^* R(\tau,z)^* R(\tau,z') S_\block(\tau,z')}{z+z'}
\]
from \eqref{eq:JSV} and Proposition~\ref{pro:R}, and we conclude that
\[
\bigoplus_{\alpha,\alpha' \in F} E_{\alpha,\alpha'}(z,z') = \frac{R(\tau,z)^* R(\tau,z') - \Id}{z+z'}.
\]
In other words, the edge contributions $E_{\alpha,\alpha'}$ (which include dressing factors) are precisely what is inserted by the propagator in Wick's formula \eqref{eq:Wick_2}.  This completes the proof of Theorem~\ref{thm:ancestor}.

\begin{remark}
  Consider a K\"ahler manifold $E$ equipped with the action of a torus $T$, with no further assumptions about the structure of the fixed point manifold $E^T$ or the one-dimensional orbits. Theorem~\ref{thm:ancestor} can be extended to this general situation. First, virtual localization in genus zero shows, as in \S\ref{sec:genus_zero}, that the fundamental solution matrix $S_E(\tau,z)$ can be factored as
  \[ 
  S_E(\tau,z)=R(\tau,z) \, S_{E^T}^{\tw}\big(u(\tau),z\big)
  \]
  where $R$ is a power series in $z$, and $\tau \mapsto u(\tau)$ is a certain non-linear diffeomorphism between the parameter spaces $H^\bullet_T(E)$ and $H^\bullet_T(E^T)$ which is defined over the field of fractions of the coefficient ring $H^\bullet_T(\{ \text{\rm point} \} )$ tensored with suitable Novikov ring (and can be specified in terms of genus-zero Gromov--Witten invariants).
 Then, virtual localization in all genera shows that
 \[  
 \cA_E(\tau; {\mathbf t}) = \widehat{R (\tau)} \, \cA_{E^T}^{\tw}\big(u(\tau); {\mathbf t}|_{E^T}\big).
 \]
 However, we refrained from phrasing our proof of Theorem~\ref{thm:ancestor} in this abstract setting.  In the context of general torus actions, one-dimensional $T_{\mathbb C}$-orbits -- called legs in \S\ref{sec:define_Gamma} --  may depend on parameters. Some foundational work is needed here -- a systematic description of leg moduli spaces and their virtual fundamental cycles -- and establishing these details, which are unimportant to the essence of our argument, would carry us too far away from the current aim. This is the only reason why we limit the proof of Theorem~\ref{thm:ancestor} to the case of toric bundles.     
 \end{remark}

\section{The Non-Equivariant Limit}
\label{sec:non-equivariant_limit}

As discussed in the Introduction, combining Theorem~\ref{thm:ancestor} with the Quantum Riemann-Roch theorem yields
\begin{equation}
  \label{eq:equivariant_formula}
  e^{F^{\eq}(\tau)}\A^{\eq}(\tau) = e^{F^{\eq}(\tau)-\sum_{\alpha} F^{\alpha,\tw}_{B}(u_\alpha(\tau))} \widehat{R(\tau)} \widehat{S_\block(\tau)} \widehat{\Gamma_\block^{-1}}
  \prod_{\alpha \in F} \cD_B
\end{equation}
where $\Gamma_\block = \oplus_{\alpha \in F} \Gamma_{\alpha}$ and $S_{\block}(\tau) = \oplus_{\alpha\in F} S^{\alpha,\tw}\big(u_{\alpha}(\tau)\big)$.  Thus the $T$-equivariant ancestor potential $\cA^\eq(\tau)$ for $E$ is obtained from $\cD_B^{\otimes |F|}$ by the application of quantized loop group operators.  We need to show that the same is true for the \emph{non-equivariant} ancestor potential $\cA_E(\tau)$ of $E$, and so we need to analyse the non-equivariant limit of \eqref{eq:equivariant_formula}.

Note first that the products $S^{\alpha,\tw}(u_{\alpha}) \, \Gamma_{\alpha}^{-1}$ can be Birkhoff-factorized as:
\[ 
S^{\alpha, \tw}(u_{\alpha}, z) \, \Gamma_{\alpha}^{-1}(z) = R_{\alpha}(u_{\alpha}, z) \,  S_B\big(\tau^*_{\alpha}(u_{\alpha}), z\big)
\]
where $R_{\alpha}$ is an operator-valued power series in $z$, and $u\mapsto \tau_\alpha^*(u)$ is a non-linear change of coordinates\footnote{This is the ``mirror map''.} on $H^{\bullet}(B)$. Indeed, the Quantum Riemann--Roch theorem~\cite{Coates-Givental} implies that $\Gamma_{\alpha}^{-1}$ transforms the overruled Lagrangian cone $\cL_B$ defined by the genus-zero Gromov--Witten theory of $B$ into the overruled cone $\cL^{\alpha,\tw}$ for the twisted theory: $\cL^{\alpha,\tw} = \Gamma^{-1}_{\alpha}\cL_B$. Thus, the operator $R_{\alpha}$ on the space $\cH^B_{+}=H_B[[z]]$ is obtained from the following composition: 
\[
\xymatrix{
  R_{\alpha}^{-1} \colon  \cH^B_{+} \ar[rr]^{S^{\alpha,\tw}(u)^{-1}} && 
  T_u\cL^{\alpha,\tw} \ar[rr]^{\Gamma_{\alpha}} &&
  T_{\tau^*_{\alpha}}\cL_B \ar[rr]^{S_B(\tau^*_{\alpha})} &&
  \cH^B_{+}}
\]
and so
\[ 
R(\tau) \, S_{\block}(\tau) \, \Gamma_{\block}^{-1} = R'(\tau) \left( \bigoplus_{\alpha \in F}  S_B\big(\tau^*_{\alpha}(u_\alpha(\tau))\big)\right)
\] 
where $R'(\tau) := R(\tau) \Big(\!\oplus_{\alpha\in F}R_{\alpha}\big(u_{\alpha}(\tau)\big)\Big)$ is an operator-valued power series in $z$. Consequently, for some scalar functions $c$ and
$\tilde{c}$, we have:
\begin{align}
  \label{eq:refactored}
  \cA^{\eq}_E(\tau) = e^{c(\tau)} \widehat{R'(\tau)} \prod_{\alpha\in F} \cA_B(\tau^*_{\alpha}) && \text{and} &&
  \cD_E^{\eq}= e^{\tilde{c}(\tau)} \widehat{S_E(\tau)^{-1}} \widehat{R'(\tau)} \prod_{\alpha\in F} \widehat{S_B(\tau^*_{\alpha})} \cD_B.
\end{align}
Note that the relation between $\tau \in H_T^{\bullet}(E)$ and $\oplus_{\alpha \in F} \tau^*_{\alpha} \in H_T^{\bullet}(E^T)$ here is a complicated change of variables, given by composing the Dijkgraaf--Witten maps $\tau \mapsto u_{\alpha} (\tau)$ with the mirror maps $u_{\alpha}\mapsto \tau^*_{\alpha}(u_{\alpha})$.
      
In this section we show that, at least for some range\footnote{It suffices to establish that the non-equivariant limit exists for a single value of $\tau$, since $\cD_E^{\eq}$ and $\cD_B$ are independent of $\tau$.} of $\tau$, the operator $R'$ has a well-defined non-equivariant limit. The ingredients here are Brown's mirror theorem for toric bundles~\cite{Brown_toric}, which provides a certain family of elements $I_E(t,\tau_B,{-z})$ on the Lagrangian cone $\cL_E$ for the $T$-equivariant Gromov--Witten theory of $E$, and an analysis of the stationary phase asymptotics of the oscillating integrals that form the mirror for the toric fiber $X$ of $E$.  Since all other ingredients in \eqref{eq:refactored} -- $\cA^{\eq}_E$, $\cA_B$, $\cD^{\eq}_E$, $\cD_B$, $S_E$, and $S_B$ -- also have well-defined non-equivariant limits, it follows that the same is true for $\tilde{c}$ and $c$.
Thus the entire formula \eqref{eq:refactored} relating $\cD_E$  to $\cD_B^{\otimes |F|}$ by the action of a quantized loop group operator admits a non-equivariant limit.   Finally, in \S\ref{sec:grading_preserving} we complete the proof of Theorem~\ref{thm:main}, by checking that, in the non-equivariant limit, this loop group operator is grading-preserving.

\subsection{The $I$-Function of $E$}
\label{sec:yet_more_toric_bundles}

Recall that our toric bundle $E \to B$ is obtained from the total space of a direct sum of line bundles $L_1 \oplus \cdots \oplus L_N \to B$ by fiberwise symplectic reduction for the action of a subtorus $K = (S^1)^k$ of $T = (S^1)^N$.  Let $\mu$ denote the moment map for the action of $K$ on the total space of $L_1 \oplus \cdots \oplus L_N \to B$, so that $E = \mu^{-1}(\omega)/K$. The quotient map $\mu^{-1}(\omega) \to E$ exhibits $\mu^{-1}(\omega)$ as a principal $K$-bundle over $E$.  Since $K = (S^1)^k$, this defines $k$~tautological $S^1$-bundles over $E$, each of which carries an action of $T$.  Let $P_1,\ldots,P_k \in H^2_T(E;\ZZ)$ be the $T$-equivariant first Chern classes of the corresponding anti-tautological bundles, and let $p_1,\ldots,p_k$ be the restrictions of $P_1,\ldots,P_k$ to the fiber $X$.  Without loss of generality, by changing the identification of $K$ with $(S^1)^k$ if necessary, we may assume that the classes $p_1,\ldots,p_k$ are ample.  The classes $p_1,\ldots,p_k$ generate the $T$-equivariant cohomology algebra $H^\bullet_T(X)$; let
\begin{equation}
  \label{eq:monomials}
  \Delta_\beta(p_1,\ldots,p_k)
\end{equation}
be monomials in $p_1,\ldots,p_k$, indexed by $\beta$, that together form a basis for $H^\bullet_T(X)$.

Let $\lambda_1,\ldots,\lambda_N$ denote the first Chern classes of the $N$~anti-tautological bundles on $BT = (\CC P^\infty)^N$, so that $R_T := H^\bullet_T(\{\text{\rm point}\};\QQ) = \QQ[\lambda_1,\ldots,\lambda_N]$.  The $T$-equivariant cohomology algebra $H^\bullet_T(E)$ has basis $\Delta_\beta(P_1,\ldots,P_k)$ over $H^\bullet(B) \otimes R_T$ (cf.~\cite{US}).  Let $\Lambda_j$ denote the first Chern class of the dual bundle $L_j^\vee$, so that the $T$-equivariant first Chern class of $L_j$ is $-\Lambda_j - \lambda_j$, and define:
\begin{align*}
  u_j = \sum_{i=1}^{k} m_{ij} p_i - \lambda_j 
  && U_j=\sum_{i=1}^{k} m_{ij} P_i -\Lambda_j - \lambda_j && 1 \leq j \leq N.
\end{align*}
Then $u_j$ is the $T$-equivariant cohomology class Poincar\'e dual to the $j$th toric divisor in $X$, and $U_j$ is the $T$-equivariant cohomology class Poincar\'e dual to the $j$th toric divisor in $E$.  

Let $J^B(\tau_B,z)$ denote the $J$-function of $B$, and write: 
\[
J^B(\tau_B,z) =\sum_{\beta\in \Eff(B)} J^B_\beta(\tau_B,z) Q_B^\beta.
\]
Here $\tau_B\in H^\bullet(B)$, which can be written as $\tau_B=\sum_b \tau_b\phi_b$ after choosing a basis $\phi_1,\ldots,\phi_{\rk H^\bullet(B)}$ for $H^\bullet(B)$.

The $I$-function of $E$ is:
\begin{equation}
  \label{eq:what_is_I}
  I_E(t,\tau_B,z):=e^{Pt/z} \sum_{d\in \ZZ^k} \sum_{\beta\in \Eff(B)} J^B_\beta(\tau_B,z) Q_B^\beta q^d e^{dt}
  \prod_{j=1}^N 
  \frac{\prod_{m=-\infty}^{0} (U_j+mz)}{\prod_{m=-\infty}^{U_j(d, \beta)} (U_j+mz)}
\end{equation}
where:
\begin{align*}
  &t:=(t_1,\dots, t_k) &
  &d  = (d_1,\ldots,d_k) &
  &Pt:=\sum_{i=1}^k P_it_i \\ 
  &dt:=\sum_{i=1}^k d_it_i &
  &q^d := q_1^{d_1} \cdots q_k^{d_k} &
  &U_j(d, \beta) :=\sum_{i=1}^k d_im_{ij}-\int_{\beta}\Lambda_j
\end{align*}
We have that:
\begin{equation}
  \label{eq:I_mod_Q}
  I_E(t,\tau_B,z) = z e^{Pt/z} e^{\tau_B/z} + \bigO(Q)
\end{equation}
Brown proves~\cite[Theorem~1]{Brown_toric} that $I_E(t,\tau_B,{-z})$ lies on the Lagrangian cone $\cL_E$ defined by the $T$-equivariant Gromov--Witten theory of $E$.  

Recall the monomials $\Delta_\beta$ defined in \eqref{eq:monomials}.  The classes 
\begin{equation}\label{def:basis_H(E)}
\Delta_\beta(P_1,\ldots,P_k) \phi_b,
\end{equation}
where $\{\phi_b\}$ is a basis for $H^\bullet(B)$, form a basis for $H^\bullet_T(E)$, and this basis has a well-defined non-equivariant limit.  Elements of this basis are indexed by pairs $(\beta, b)$. Denote by $T$ the following matrix whose column vectors are also indexed by pairs $(\beta,b)$:
\begin{equation}
  \label{eq:what_is_T}
  T(z) = 
  \begin{bmatrix}
    \Delta_\beta\Big(z \frac{\partial}{\partial {t_1}},\ldots,z \frac{\partial}{\partial {t_k}}\Big) \partial_{\phi_b} I_E(t,\tau_B,z)
  \end{bmatrix}.
\end{equation}
More precisely, the column of $T(z)$ corresponding to the pair $(\beta,b)$ is $\Delta_\beta\Big(z \frac{\partial}{\partial {t_1}},\ldots,z \frac{\partial}{\partial {t_k}}\Big) \partial_{\phi_b} I_E(t,\tau_B,z)$.

The columns of $T(-z)$ form a basis for $T_{I(t,\tau_B,{-z})} \cL_E$ over the ring $\Nov\{z\}$ of power series in $q_1,\ldots,q_k$ and Novikov variables of $B$ with coefficients which are polynomials of $z$: see \eqref{eq:I_mod_Q}.  For an appropriate value of $\tau$, determined by $t$ and $\tau_B$, the columns of $S_E(\tau,{-z})^*$ form another such basis, which consists of series in $z^{-1}$.  Expressing the columns 
of $T$ in terms of columns of $S$ yields
\[
T({-z}) = S_E(\tau,{-z})^* L(\tau, z);
\]
where $L$ is a matrix with entries in $\Nov\{z\}$; this is the Birkhoff factorization of $T$.  Since all other ingredients in this identity admit a non-equivariant limit, $L(\tau, z)$ does too.  

The products 
\[
\frac{\prod_{m=-\infty}^{0} (U_j+mz)}{\prod_{m=-\infty}^{U_j(d, \beta)} (U_j+mz)}
\]
that occur in \eqref{eq:what_is_I}, and thus in \eqref{eq:what_is_T}, are rational functions of $z$.  When $\lambda_j \ne 0$ for each $j$, we expand these as Laurent series near $z=0$.  We saw in \S\ref{sec:genus_zero} that the same operation applied to $S_E(\tau,z)$ yields
\[
S_E(\tau,z) =  R(\tau,z) S_\block(\tau,z).
\]
Thus
\[
T(-z) \sim  S_\block(\tau,z)^{-1} R(\tau,z)^{-1} L(\tau, z)
\]
where $\sim$ denotes the expansion near $z=0$. 
In the next two subsections we will show that the expansion of $\Gamma_\block (z) \, T({-z})$ near $z=0$ has a non-equivariant limit, by identifying this expansion with the stationary phase expansion of certain oscillating integrals.

\subsection{Oscillating Integrals}

Consider $W = \sum_{j=1}^{N} (x_j + \lambda_j \log x_j)$, and oscillating integrals 
\[
\int_{\gamma} e^{W/z} \,
\frac{
\prod_{j=1}^{N} d \log x_j
}{
\prod_{i=1}^{k} d \log (q_ie^{t_i})  
}
\]
where $\gamma$ is a cycle in the subvariety of $(\Cstar)^N$ defined by
\begin{align}
  \label{eq:fiber}
  \prod_{j=1}^{N} x_j^{m_{ij}} = q_i e^{t_i} && 1 \leq i \leq k
\end{align}
given by downward gradient flow of $\Re(W/z)$ from a critical point of $W$. Such oscillating integrals, over an appropriate set of cycles $\gamma$, together form the mirror to the $T$-equivariant quantum cohomology of the toric manifold $X$~\cite{Giv_toric}.  We now relate these integrals to the $q$-series $I_E(t,\tau_B,z)$ by expanding the integrand as a $q$-series, following~\cite{Brown_toric}.

Given a $T$-fixed point $\alpha$ on $X$, one can solve the equations \eqref{eq:fiber} for $x_i$,~$i \in \alpha$, in terms of $x_j$,~$j \not \in \alpha$:
\begin{align}
  \label{eq:solve_for_x_i}
  x_i = \prod_{l=1}^{k} (q_l e^{t_l})^{(m_\alpha^{-1})_{il}}  \prod_{j \not \in \alpha} x_j^{-(m_\alpha^{-1} m)_{ij}}
  &&
  i \in \alpha
\end{align}
where $m_\alpha$ is the $k \times k$ submatrix of $(m_{ij})$ given by taking the columns in $\alpha$; this defines a chart on the toric mirror \eqref{eq:fiber}.  In this chart the integrand $e^{W/z}$ becomes 
\[
\Phi_\alpha := \left(\prod_{i=1}^k (q_i e^{t_i})^{\alpha^*(p_i)/z}\right)
e^{\sum_{j \not \in \alpha} (x_j - \alpha^*(u_j) \log x_j )/z}
\sum_{\substack{
    d \in \ZZ^k : \\
    \text{$u_j(d) \geq 0$ for $j \in \alpha$}
  }}
  (q_1 e^{t_1}) ^{d_1} \cdots (q_k e^{t_k})^{d_k} 
  \frac{\prod_{j \not \in \alpha} x_j^{-u_j(d)}}
  {\prod_{j \in \alpha} u_j(d)!z^{u_j(d)} }
\]
where $u_j(d) = \sum_{i=1}^k d_i m_{ij}$; cf.~the proof of \cite[Theorem 3]{Brown_toric}.  Note that $u_j(d)$ is the value of the cohomology class $u_j \in H^2(X)$ on the element $d \in H_2(X)$ such that $p_i(d) = d_i$, and that our ampleness assumption guarantees that all $d_i$ are non-negative.  We thus consider
\[
\cI_\alpha(q e^t,z,\lambda) = \int_{(\RR_+)_{j \not \in \alpha}} \Phi_\alpha \bigwedge_{j \not \in \alpha} d \log x_j
\]
as a $q$-series of oscillating integrals with phase function ${\sum_{j \not \in \alpha} (x_j - \alpha^*(u_j) \log x_j )}$ and monomial amplitudes.  Replacing the variables $\lambda_j$ by the differential operator $\lambda_j + z \partial_{\Lambda_j}$, we can consider $\cI_\alpha$ as an operator to be applied to the $J$-function of the base $B$.  According to the computation in~\cite[Section~5]{Brown_toric}, this gives
\begin{equation}
  \label{eq:integration_by_parts}
  \left(\prod_{i=1}^k q_i^{-\alpha^*(P_i)/z}\right) \cI_\alpha(q e^t,z,\lambda + z \partial_\Lambda) J_B(\tau_B,z) = 
  \alpha^* I_E (t,\tau_B,z) \prod_{j \not \in \alpha} \int_0^\infty e^{(x - \alpha^*(U_j) \log x)/z} d \log x.
\end{equation}
Applying the differential operators in \eqref{eq:what_is_T}, we get expressions in terms of oscillating integrals for each entry $\Delta_\beta\Big(z \frac{\partial}{\partial {t_1}},\ldots,z \frac{\partial}{\partial {t_k}}\Big) \partial_{\phi_b} I_E$ of $T(z)$.  The stationary phase asymptotics of the oscillating integrals on the right (at the unique critical points $x = \alpha^*(U_j)$ of the phase functions) combine to give $(2\pi z)^{\dim_{\CC} X/2}\Gamma_\alpha(z)$; see~\cite[Section~5.2]{Brown_toric}.   Consequently
\begin{equation}
  \label{eq:entries_of_T}
  \Big(\Gamma_\block({-z}) \, T(z) \Big)^{\beta,b}_\alpha \sim
  \left(\prod_{i=1}^k q_i^{-\alpha^*(P_i)/z}\right) 
  \frac{\Delta_\beta\Big({z \textstyle \frac{\partial}{\partial {t_1}},\ldots,z \frac{\partial}{\partial {t_k}}}\Big)}{(2 \pi z)^{\frac{1}{2} \dim_{\CC} X}}
  \cI_\alpha(q e^t,z,\lambda + z \partial_\Lambda)\partial_{\phi_b}J_B(\tau_B,z),
\end{equation}
that is, the expansions near $z=0$ of the entries of $\Gamma_\block(-z) \, T(z)$ coincide with stationary phase asymptotics of the oscillating integrals on the right-hand side of \eqref{eq:entries_of_T}. We now show that these stationary phase asymptotics admit a non-equivariant limit.

\subsection{Stationary Phase Asymptotics}

The expression
\begin{equation}
  \label{eq:with_amplitudes}
  \Delta_\beta\Big({z \textstyle \frac{\partial}{\partial {t_1}},\ldots,z \frac{\partial}{\partial {t_k}}}\Big)
  \int_{\gamma} e^{W/z} \,
  \frac{
    \prod_{j=1}^{N} d \log x_j
  }{
    \prod_{i=1}^{k} d \log (q_ie^{t_i}) 
  }
\end{equation}
is an oscillating integral with the phase function $\sum_{j=1}^N (x_j + \lambda_j \log x_j)$ over a Lefschetz thimble $\gamma$ in the complex torus with equations
\begin{align}
  \label{eq:fiber_again}
  \prod_{j=1}^{N} x_j^{m_{ij}} = q_i e^{t_i} && 1 \leq i \leq k.
\end{align}
In the classical limit $q \to 0$, this torus degenerates into a union of coordinate subspaces  (\foreignlanguage{russian}{противотанковый еж})  $\bigcup_{\alpha \in F}\CC^{n-k}_\alpha$, where $\CC^{n-k}_\alpha$ is the subspace of $\CC^n$ given by the equations $x_j = 0$,~$j \in \alpha$.  The equations for critical points of $W$ under the constraints \eqref{eq:fiber_again} in the coordinate chart $\{x_j : j \not \in \alpha\}$ take the form:
\begin{align*}
  0 = x_j - \alpha^*(u_j) + \text{terms involving positive powers of $q$} 
  &&
  j \not \in \alpha.
\end{align*}
In the classical limit $q \to 0$, exactly one of these critical points approaches the critical point
\begin{align}
  \label{eq:unperturbed_critical_point}
  x_j = \alpha^*(u_j) && j \not \in \alpha
\end{align}
of the phase function $\sum_{j \not \in \alpha} (x_j - \alpha^*(u_j) \log x_j)$ on $\CC^{n-k}_\alpha$.  Call this the \emph{$\alpha$th critical point} of $W$.  On the right-hand side of \eqref{eq:entries_of_T}, we first expanded the oscillating integral \eqref{eq:with_amplitudes} as a $q$-series and then took termwise stationary phase asymptotics at the critical point \eqref{eq:unperturbed_critical_point}.  General properties of oscillating integrals~\cite[Corollary~8]{Brown_toric} guarantee that this coincides with the $q$-series expansion of the stationary phase asymptotics of \eqref{eq:with_amplitudes} at the $\alpha$th critical point of $W$.  The key point here is that, for a generic value of $q$, if we let $\lambda_j \to 0$ for all $j$ along a generic path, then the critical points of $W$ corresponding to $\alpha \in F$ remain non-degenerate.  The stationary phase expansions at these critical points depend continuously (indeed analytically) on $\lambda$, and at $\lambda = 0$ remain well-defined.

Arranging the integrals \eqref{eq:with_amplitudes} into an $|F| \times |F|$ matrix and taking stationary phase asymptotics gives
\[
\left(\sum_{k=0}^\infty z^k \Psi_k \right) 
\begin{bmatrix}
  \ddots & & 0 \\
  &e^{w_\alpha/z} & \\
  0 & & \ddots
\end{bmatrix}
\]
where the factor on the right is a diagonal matrix, $w_\alpha$ is the value of $W$ at the $\alpha$th critical point, and $\Psi_k$ is an $|F| \times |F|$ matrix.
Here $\Psi_k$ and $w_\alpha$ depend analytically on $(q,t,\lambda)$ and are well-defined in the limit $\lambda=0$; also $\Psi_0$ is invertible, as a consequence of $\Delta_\beta(p_1,\ldots,p_k)$ forming a basis in $H^\bullet_T(X)$.  The right-hand side of \eqref{eq:entries_of_T} is obtained by replacing $\lambda_j$ here by $\lambda_j + z \partial_{\Lambda_j}$, and applying the resulting differential operator to $\partial_{\phi_b} J_B(\tau_B,z)$.  

For a function $\lambda \mapsto w(\lambda)$ on $H_2(B)$ that depends on parameters (such as $q_i$ and $t_i$), consider first the action of $e^{w(z \partial_\Lambda)/z}$ on $J_B(\tau_B,z)$.  By the Divisor Equation, the action of $z \partial_\Lambda$ on the $J$-function $J_B(\tau_B,z)$ coincides with the action of $\Lambda + z Q \partial_Q$ where $Q \partial_Q$ is the derivation of Novikov variables (for $B$) corresponding to $\Lambda \in H^2(B)$.  For each $D \in H_2(B)$, we have:
\[
e^{w(\Lambda + z Q \partial_Q)/z} Q^D = e^{w(\Lambda + zD)/z} Q^D
\]
and so $e^{w(\Lambda + z Q \partial_Q)/z}$ gives a well-defined operation on the space of cohomology-valued Laurent series in $z$ with coefficients that converge $Q$-adically, provided that $w(0) = 0$.  Due to the String Equation,
\[
e^{w(0)/z} J_B(\tau_B,z) = J_B\big(\tau_B + w(0)1,z\big)
\]
On the other hand $e^{w(\Lambda + z Q \partial_Q)/z - w(0)/z} J_B(\tau_B,z)$, after flipping the sign of $z$, lies in $z T_{J_B(\tau^*,{-z})}\cL_B \subset \cL_B$ for some point $\tau^* \in H^\bullet(B)$ that depends on $w$ and $\tau_B$.  This result was first used in~\cite{Coates-Givental} in the proof of the Quantum Lefschetz theorem, but was proved incorrectly there; an accurate proof is given in~\cite{CCIT} and~\cite[Theorem~1]{Givental:explicit}.

Applying these arguments with $w = w_\alpha$ for each $\alpha \in F$, we obtain
\[
e^{-w_\alpha(q,t,\lambda - z \partial_\Lambda)/z} J_B(\tau_B,{-z}) \in z T_{J_B(\tau_\alpha^*,{-z})}\cL_B \subset \cL_B
\]
for certain $\tau^*_\alpha \in H^\bullet(B)$ depending on $(q,t,\lambda)$ and $\tau_B$.  Differentiating with $\partial_{\phi_b}$ yields a basis, indexed by $b$, for $T_{J_B(\tau_\alpha^*,{-z})} \cL_B$ as a module over power series in $z$.  These bases together give a basis for the direct sum $\oplus_{\alpha \in F} T_{J_B(\tau_\alpha^*,{-z})}\cL_B$.
Note that applying $z \partial_\Lambda$ to a family of tangent vectors $v(\tau_\alpha^*) \in T_{J_B(\tau_\alpha^*,{-z})} \cL_B$ -- here the family depends on $\tau_B$ via $\tau_\alpha^*$ -- yields another family of tangent vectors in $T_{J_B(\tau_\alpha^*,{-z})} \cL_B$.  Therefore applying the $z$-series of matrix-valued differential operators $\sum_{k=0}^\infty (-z)^k \Psi_k(q,t,\lambda - z \partial_\Lambda)$ to our basis for $\oplus_{\alpha \in F} T_{J_B(\tau_\alpha^*,{-z})}\cL_B$ yields another basis for this direct sum.  This space, however, has a standard basis, formed by the columns of $S_B(\tau_\alpha^*,z)^{-1}$, $\alpha \in F$.  Expressing our basis in terms of the standard one, we obtain
\begin{multline}
  \label{eq:finally}
  \left(\sum_{k=0}^\infty z^k \Psi_k(q,t,\lambda - z \partial_\Lambda) \right) 
  \begin{bmatrix}
    \ddots & & 0 \\
    &e^{-w_\alpha(q,t,\lambda - z \partial_\Lambda)/z} & \\
    0 & & \ddots
  \end{bmatrix}
  S_B(\tau_B,z)^{-1} \\
  =
  \begin{bmatrix}
    \ddots & & 0 \\
    &S_B(\tau_\alpha^*,z)^{-1} & \\
    0 & & \ddots
  \end{bmatrix}
  R''(z)^{-1}
\end{multline}
for some invertible matrix-valued $z$-series $R''(z)$ with entries in the Novikov ring of $B$ that depend analytically on $(q,t,\lambda)$.  Here we used the fact that the columns of $S_B(\tau_B,z)^{-1}$ are $\partial_{\phi_b} J_B(\tau_B,{-z})$.  The left-hand side of \eqref{eq:finally} is the expansion near $z=0$ of $\Gamma_\block(z) \, T(-z)$, and the right-hand side provides its analytic extension to the non-equivariant limit $\lambda=0$.  Thus the  expansion near $z=0$ of $\Gamma_\block({-z}) \, T(z)$ has a well-defined non-equivariant limit, as claimed.

\subsection{Grading}
\label{sec:grading_preserving}
To complete the proof of Theorem~\ref{thm:main}, it remains to show that the loop group operator just defined respects the gradings.  In the non-equivariant limit $\lambda=0$, all the functions of $q$,~$t$,~$Q$, and~$\tau_B$ involved satisfy homogeneity conditions that reflect the natural grading in cohomology theory. To describe these conditions explicitly, introduce the {\em Euler vector field} 
\[ 
 \cE := 
\sum_{i=1}^k c_i q_i\frac{\partial}{\partial q_i} +
\sum_{a=1}^r \delta_a Q_a \frac{\partial}{\partial Q_a} +
\sum_{b=1}^{\text{rk}\, H^\bullet (B)} \left(1-\frac{\deg (\phi_b)}{2}\right) \tau_b \frac{\partial}{\partial \tau_b}.
\]
Here $c_i$ and $\delta_a$ are the coefficients of the first Chern class of $E$ with respect to an appropriate basis:
\[
c_1(E)=\sum_{j=1}^N U_j+\pi^*c_1(B) =
\sum_{i=1}^k\left(\sum_{j=1}^N m_{ij}\right) P_i +
\pi^*\left(c_1(B)-\sum_{j=1}^N\Lambda_j\right) = \sum_{i=1}^k c_iP_i+\sum_{a=1}^r
\delta_a \pi^*\phi_a
\]
where we have chosen our basis $\phi_1,\ldots,\phi_{\rk H^\bullet(B)}$ for $H^\bullet(B)$ such that $\phi_1,\ldots,\phi_r$ is a basis for $H^2(B)$.  

In what follows, we abuse notations and use the same notation to denote both an equivariant object and its non-equivariant limit. Let $\mu_E$ denote the Hodge grading operator for $E$. That is, in a homogeneous basis of $H^\bullet(E)$, $\mu_E$ is the diagonal matrix whose diagonal entries are half-integers expressing the degrees of the basis elements measured relative to the 
middle (complex) dimension of the target: 
\[
  \mu_E\big(\Delta_\beta(P)\otimes \pi^*\phi_b\big) =
  \frac{\text{deg}\Delta_\beta+\text{deg}\phi_b-\text{dim}_{\CC} E}{2}
  \big(\Delta_\beta(P) \otimes \pi^*\phi_b\big).
\]  

\begin{lemma}  
In its non-equivariant limit the matrix $T$, defined in \eqref{eq:what_is_T}, satisfies the grading condition
\begin{equation}
  \label{eq:grading_T}
  \left(z\frac{d}{dz}+\cE\right) T(-z)= T(-z) \mu_E-\mu_E T(-z),
\end{equation}
where $\mu_E$ is the Hodge grading operator for $E$.
\end{lemma}

\begin{proof} 
  Recall from \eqref{eq:what_is_I} that $I_E$ is a homogeneous expression of overall degree $1$ that takes values in the cohomology of $E$. Its components, in the graded basis $\Delta_\beta(P)\otimes \pi^*\phi_b$, have degrees $1-\deg \Delta_\beta /2 - \deg \phi_b /2 $ as functions of the variables $z, t, \tau, q, Q$. The columns of the matrix $T$ -- see \eqref{eq:what_is_T} -- are obtained from $I_E$ by applying the differential operators $\Delta_\beta(z\frac{\partial}{\partial t}) \partial_{\phi_b}$; these increase the degree by $\deg \Delta_\beta /2+ \deg \phi_b /2 -1$. The commutator $T(-z) \mu_E - \mu_E T(-z)$ on the right of \eqref{eq:grading_T} simply multiplies each matrix entry of $T$ by the difference of the column and row degrees, i.e. by the degree of the matrix entry as a homogeneous function of the variables $z, t, \tau, q, Q$. It equals the eigenvalue of this homogeneous function considered as an eigenvector of the Euler operator $z\frac{d}{dz}+\cE$ on the left hand side of \eqref{eq:grading_T}.
\end{proof}

We can rewrite the grading condition \eqref{eq:grading_T} using the Divisor Equations
\begin{align*}
  q_i\frac{\partial}{\partial q_i} T(-z) =
  \left(\frac{\partial}{\partial t_i} +\frac{P_i}{z}\right) T(-z) && \text{and} &&
 Q_a\frac{\partial}{\partial Q_a} T(-z) =
   \left(\frac{\partial}{\partial \tau_a} +\frac{\phi_a}{z}\right) T(-z)
\end{align*}
to replace $\cE$ with $\partial_{\cE} +c_1(E)/z$, 
where
\[ 
\partial_{\cE} = \sum_{i=1}^k c_i\frac{\partial}{\partial t_i} +
\sum_{a=1}^r \delta_a  \frac{\partial}{\partial \tau_a} +\sum_{b=1}^{\rk H^\bullet(B)}
\left(1-\frac{\deg (\phi_b)}{2}\right) \tau_b \frac{\partial}{\partial \tau_b}.
\]
From the non-equivariant version of the quantum differential equation \eqref{eqn:QDE} for $S_E$, we see that $T(-z)=S_E(z)^{-1}L(z)$ satisfies the ODE
\begin{align*}
  -z\partial_{\cE} T(-z) = T(-z) \cE (z)
  &&
  \text{where}
  && 
  \cE(z) = L^{-1}(z)(\partial_{\cE}\bullet) L(z) -z L^{-1}(z)\partial_{\cE} L(z)
\end{align*}
is regarded as a power series in $z$. Thus, considering $T$ as an operator, we have 
\begin{equation}\label{eq:commutation_T}
 \left( z\frac{d}{dz}+\mu_E+\frac{c_1(E)}{z}\right) T(-z) = T(-z)\left(
    z\frac{d}{dz} + \mu_E+\frac{\cE(z)}{z}\right).
\end{equation}

\medskip

On the other hand, the matrix $\cI$ of oscillating integrals, whose stationary phase asymptotics yield \eqref{eq:finally}, in the non-equivariant limit assumes the form
\[ 
\cI (-z):= \bigoplus_{\alpha \in F} \left[ \frac{\Delta_{\beta}\left(\textstyle -z \frac{\partial}{\partial t_1,} \dots,
-z \frac{\partial}{\partial t_k}\right)}{(-2\pi z)^{\frac{1}{2}\dim_{\CC} X}}
\int_{\gamma_\alpha} e^{-\sum_j x_j/z} \prod_j x_j^{\partial_{\Lambda_j}}
\frac{\prod_j d\log x_j}{\prod_i d\log (q_ie^{t_i})} \right]
S_B(\tau_B, z)^{-1}.
\]
Here columns of $\cI$ are indexed by pairs $(\beta, b)$ that correspond to elements of the basis $\{ \Delta_\beta(P) \otimes \pi^*\phi_b \}$ for $H^\bullet (E)$, while the rows are indexed by pairs $(\alpha, a)$ where $\alpha$ determines the integration cycle $\gamma_\alpha$ and $\phi_a$ is an element of the basis for $H^\bullet(B)$. Note that $a$ and $b$ are row and column indices in the matrix $S_B^{-1}$.

\begin{lemma} In its non-equivariant limit, the matrix $\cI$ satisfies the grading condition
\begin{equation}\label{eqn:grading_cI}
\left(z\frac{d}{dz}+\partial_{\cE}+\frac{c_1(B)}{z}\right) \cI(-z) = \cI (-z) \mu_E-\mu_B \cI (-z).
\end{equation}
\end{lemma}

\begin{proof} 
The Hodge grading operator $\mu_E$ on the right is decomposed as $\mu_X+\mu_B$ (or, more precisely, as $\mu_X \otimes 1+1\otimes \mu_B$ in the basis $\Delta_\beta(P)\otimes \pi^*\phi_b$ of $H^{\bullet}(E)$ indexing columns of $\cI$), and the part $\cI(-z) \mu_X$ arises here from the degrees of the operators $\Delta_\beta(-z\partial/\partial t)/(-2\pi z)^{\frac{1}{2}\dim_\CC X}$ with respect to $z\frac{d}{dz}$.   

We examine now how $z\frac{d}{dz}+\partial_{\cE}+\frac{c_1(B)}{z}$ acts on the integral in the definition of $\cI(-z)$. Since the Lie derivative of a closed form is exact, to differentiate an integral along a vector field it suffices to differentiate the integrand along a lift of the vector field to the domain of integration. The vector field $\sum_i c_i \partial/\partial t_i$ lifts along the projection \eqref{eq:fiber} to the Euler operator $Eu:=\sum_j x_j \partial/\partial x_j$. The latter acts trivially on $S_B^{-1}$ but satisfies 
$Eu \prod_jx_j^{\partial_{\Lambda_j}}=\prod_j x_j^{\partial_{\Lambda_j}}(Eu +\sum_j \partial_{\Lambda_j})$. 
Since $\sum_a \delta_a \partial/\partial \tau_a + \sum_j \partial_{\Lambda_j} = \partial_{c_1(B)}$, the action of $z \frac{d}{dz}+\partial_{\cE}+\frac{c_1(B)}{z}$ on the integral in $\cI(-z)$ amounts to applying $z\frac{d}{dz}+\partial_{c_1(B)}+\frac{c_1(B)}{z}+\sum_b \left(1-\frac{\deg (\phi_b)}{2}\right)\tau_b \frac{\partial}{\partial\tau_b}$ to $S_B^{-1}$ instead.

It remains to refer to the grading condition for $S_B^{-1}$ which, taking into account the Divisor Equations $Q_a\partial/\partial Q_a S_B^{-1}= (\partial /\partial \tau_a + \phi_a /z) S_B^{-1}$, $a=1,\dots, \rk H^2(B)$, assumes the form
\[ 
  \left(z\frac{d}{dz}+\partial_{c_1(B)}+\frac{c_1(B)}{z}+\sum_b \left(1-\frac{\deg (\phi_b)}{2}\right)\tau_b \frac{\partial}{\partial\tau_b}\right) S_B^{-1}= S_B^{-1} \mu_B - \mu_B S_B^{-1}.
\]
See \cite[Section 8]{Giv_quantization}. Altogether we obtain:
\[ \left(z\frac{d}{dz}+\partial_{\cE}+\frac{c_1(B)}{z}\right) \cI(-z) = \cI(-z)\mu_X+\cI(-z)\mu_B-\mu_B\cI(-z)\]
as promised.
 \end{proof}
 
As a function of $(t, \tau_B)$, $\cI$ satisfies the same differential equations
as $T$, since even before taking the non-equivariant limit $\cI$ and $T$ differ only by 
$\Gamma$-function factors that are independent of $t$ and $\tau_B$. In particular
\[ -z \partial_{\cE}\cI(-z)= \cI(-z) \cE(z).\]
Considering $\cI(-z)$ as an operator, we thus arrive at the following commutation relation:
\begin{equation}
\label{eq:commutation_I}
 \cI(-z)^{-1}\left( z\frac{d}{dz}+\mu_B +\frac{c_1(B)}{z}\right) =
 \left( z\frac{d}{dz}+\mu_E+\frac{{\cE}(z)}{z}\right) \cI (-z)^{-1}.
\end{equation}

Our previous results equate $\cD_E$, up to a constant factor, with
\[ \widehat{S_E^{-1}}\, \widehat{L} \, \widehat{R'}\,
\left(\widehat{\oplus_\alpha S_B(\tau^*_{\alpha})}\right)\, \cD_B^{\otimes |F|}.\]
Here $S_E^{-1}(z) L(z)$ coincides with $T(-z)$, and
$R'(z)\left(\oplus_{\alpha}S_B(\tau^*_{\alpha},z)\right)$ is the matrix inverse to the
stationary phase asymptotics of $\cI (-z)$.  Note that the values of the arguments $\tau$ and $\tau^*_{\alpha}$ in $S_E^{-1}$ and $S_B$ are determined by certain mirror maps and cannot be described here explicitly. Nevertheless the complicated relationship between them holds automatically, because all ingredients of the formula were constructed from the same function $I_E$. Now the commutation relations \eqref{eq:commutation_T} and \eqref{eq:commutation_I} show that commuting the Virasoro grading operator $l_0(E)=zd/dz +1/2+\mu_E+c_1(E)/z$ first across $T(-z)$ and then across the matrix inverse of the stationary phase asymptotics of $\cI(-z)$  yields $l_0(B)=zd/dz+1/2+\mu_B+c_1(B)/z$, the Virasoro grading operator for $B$. Thus the loop group transformation that relates $\cD_B^{\otimes |F|}$ and $\cD_E$ is grading-preserving, as claimed.


\section*{Acknowledgments}
We thank the referee for many valuable comments and suggestions. Coates was supported in part by a Royal Society University Research Fellowship, ERC Starting Investigator Grant number~240123, ERC Consolidator Grant 682603, and the Leverhulme Trust. Givental was supported by NSF grants DMS-0604705, DMS-1007164, DMS-1611839, and DMS-1906326. Tseng was supported in part by a Simons Foundation Collaboration Grant. Coates thanks the University of California at Berkeley for hospitality during the writing of this paper.  Givental thanks the Center for Geometry and Physics of the IBS at Pohang, Korea, and the Center's director Yong-Geun Oh for hospitality and support.  Special thanks are due to Jeff Brown, who was a part of this project since its inception, and even wrote the first draft of this paper, but later chose to withdraw from our team.

\end{document}